\documentclass[final,12pt]{elsarticle}

\usepackage[utf8]{inputenc}
\usepackage[T1]{fontenc}
\usepackage[english]{babel}
\usepackage{amssymb,mathtools}
\usepackage{booktabs}
\usepackage{array}
\usepackage{tikz}
\usepackage{enumitem}
\usepackage{xcolor}
\usepackage{subcaption}
\usepackage{amsthm}
\usepackage{placeins}
\usepackage{url}

\newtheorem{theorem}{Theorem}[section]
\newtheorem{proposition}[theorem]{Proposition}
\newtheorem{lemma}[theorem]{Lemma}
\newtheorem{corollary}[theorem]{Corollary}

\newtheorem{example}[theorem]{Example}
\theoremstyle{remark}
\newtheorem{remark}[theorem]{Remark}

\begin{document}

\begin{frontmatter}

\title{Below-threshold Bistability and Implementation Lag
in a Simplex Model of Radical Vote-Share Dynamics}

\author{Alexander Omelchenko}
\ead{aomelchenko@constructor.university}
\address{Constructor University Bremen gGmbH,
Campus Ring 1, 28759 Bremen, Germany}

\begin{abstract}
We study a nonautonomous compartmental model on the probability simplex for
radical vote-share dynamics under delayed policy implementation. The model
distinguishes between a target regime and an effective regime that relaxes
toward it with finite speed, and it includes a persistent alienation reservoir
that can later be mobilised by radical actors. Two nonlinear effects drive the
analysis. First, in a symmetric benchmark, persistent alienation destroys the
global one-threshold picture: below the local instability threshold, the
radical-free centrist--alienated equilibrium may coexist with a stable positive
equilibrium, separated by a saddle branch born at a saddle-node bifurcation.
By hyperbolic persistence, the coexistence survives on an open neighbourhood
of the symmetric baseline; numerical exploration identifies a wider bistable
region under asymmetric perturbations.
Second, delayed implementation splits announced and effective threshold passage,
creating an explicit parameter-space lag between target and effective crossing
and a further state-space lag before a visible state response appears in
radical support. The analytical framework is based on a Perron--Frobenius threshold for
the frozen radical-free equilibrium, a geometric characterisation of positive
equilibria, local adiabatic tracking under slow uniformly subcritical drift,
and explicit delay bounds for transversal threshold passage. Under initial
matching, the parameter-space delay scales as \(O(\kappa_\theta^{-1})\); in a
monotone ramp example the bound is nearly attained, whereas the additional
state-space lag is substantially larger and depends much more weakly on the
implementation speed. The results imply that threshold restoration is
preventive rather than curative: once the state has entered the basin of the
radicalised attractor, returning below the local threshold need not restore the
radical-free regime.
\end{abstract}

\begin{keyword}
below-threshold bistability \sep
hysteresis \sep
implementation lag \sep
delayed threshold crossing \sep
adiabatic tracking \sep
Perron--Frobenius threshold \sep
radical vote-share dynamics
\end{keyword}

\end{frontmatter}

\section{Introduction}\label{sec:intro}

Electoral support for radical parties often responds to institutional change
only after a delay, because reforms that are announced or adopted do not
instantly become operative in the environments that shape voter flows. This
paper studies that lag in a nonlinear compartmental model of radical
vote-share dynamics on the probability simplex. Radicalisation is
operationalised as the reallocation of aggregate voter support toward radical
blocs, not as individual attitude change. The state variables represent
left-radical support, right-radical support, centrist mainstream support, and
persistent latent alienation. Two mechanisms are central. First, chronic
alienation creates a reservoir that can later be mobilised into radical
support. Second, the political environment is split into a target regime
\(\theta_\star(t)\) and an effective regime \(\theta(t)\) that relaxes toward
it with finite implementation speed. The question is therefore not only
whether a frozen instability threshold exists, but when threshold restoration
becomes operative and whether it is sufficient to recover the radical-free
regime.

The target/effective distinction is meant in an operational rather than purely
mathematical sense. For example, a government may adopt a programme intended to
reduce exclusion in peripheral districts by funding counselling, language
training, housing support, or re-employment services. The target regime changes
on the date of adoption, but the effective regime changes only when budgets are
released, local offices hire staff, eligibility rules are enforced
consistently, and voters observe credible changes in their everyday
environment. During this interim period the announced policy environment and
the environment that actually affects alienation, re-engagement, and radical
recruitment can differ. Similar lags can arise after reforms of party
financing, media access, or online-platform regulation: legal rules may change
at once, whereas enforcement capacity, resource allocation, and public
perception adjust over months.

To formalise this distinction quantitatively, the frozen system provides
the local stability picture, but it does not by itself determine
transition timing under delayed implementation.
Let \(t_\star\) and \(t_{\mathrm{eff}}\)
denote the times at which the \emph{target} and \emph{effective} regimes,
respectively, cross the frozen instability threshold, so that
\(t_\star < t_{\mathrm{eff}}\). When a visible post-crossing state response
exists, let \(t_{\mathrm{obs}}\) denote its first occurrence; then
\begin{equation*}
  t_\star \;<\; t_{\mathrm{eff}} \;\le\; t_{\mathrm{obs}}.
\end{equation*}
The gap \(t_{\mathrm{eff}}-t_\star\) is a parameter-space lag caused
solely by finite implementation speed. The gap
\(t_{\mathrm{obs}}-t_{\mathrm{eff}}\) is an additional state-space lag
whenever \(t_{\mathrm{obs}}\) is finite; inside the bistable regime, a
trajectory that has entered the basin of the radicalised attractor need
not produce a finite \(t_{\mathrm{obs}}\) upon threshold restoration. Two questions therefore arise: (i) before \(t_\star\), when
does a trajectory continue to shadow a moving radical-free branch under
slow effective drift? (ii) how large is \(t_{\mathrm{eff}}-t_\star\),
and how does it scale with \(\kappa_\theta\)? The paper proves an
explicit \(O(\kappa_\theta^{-1})\) bound for the first gap and
documents numerically that the second gap \(t_{\mathrm{obs}}-t_{\mathrm{eff}}\)
is substantially larger and depends more weakly on \(\kappa_\theta\).
These questions place the model within the broader context of slowly
varying dynamical systems, stability-loss delay, and critical transitions
\cite{fenichel1979,kuehn2015,neishtadt2009,kuehn2011,cantisan2023}. At
the frozen level, the threshold analysis also has structural parallels
with Perron--Frobenius reproduction-number methods and backward
bifurcation phenomena in compartmental epidemiology
\cite{diekmann1990,vandendriessche2002,hadeler1995,castillo2004}.

The model builds on compartmental voter-share dynamics
\cite{volkening2020} and related mean-field approaches to opinion
formation and polarisation \cite{castellano2009,baumann2020}. Its empirical
motivation is anchored in three adjacent literatures. First, studies of mass
and affective polarisation document that electoral movement can be driven by
social identity, partisan sorting, and hostile out-party affect, not only by
changes in policy preferences
\cite{hetherington2009,iyengar2012,iyengarwestwood2015}. Second, work on
political support and democratic disaffection documents the erosive role of
distrust, dissatisfaction, and disengagement in advanced democracies
\cite{dalton2004,norris2011,mair2013}. Third, policy-implementation research
stresses that adoption does not equal operative implementation: effective
outcomes depend on administrative capacity, local enforcement, resource
allocation, and multi-actor implementation chains
\cite{pressman1984,sabatier1980,barrett2004}. Together these literatures
motivate the separation between the announced or intended institutional
levers collected in \(\theta_\star\) and their realised effects on
voter-flow rates collected in \(\theta\).

A concrete instance of the target/effective split arises in the German case
following the \textit{Integrationsgesetz} of August 2016. The law was adopted
with the explicit aim that earlier integration would be more effective, setting
a clear target regime for the political environment governing voter flows.
However, the Federal Audit Office (\textit{Bundesrechnungshof}) documented a
persistent implementation lag: average waiting times for integration courses
rose from 17 weeks in 2016 to 30.5 weeks in 2019, well above the six-week
target mandated by the law \cite{bundesrechnungshof2021}. Over the same
period, the AfD sustained substantial electoral support, entering the
Bundestag with 12.6\% in September 2017 \cite{bundeswahlleiter2017} and
remaining a significant force thereafter. We do not claim a direct causal
link between implementation lag and radical vote shares---the model isolates
a mechanism, not a specific causal pathway. Rather, this episode illustrates
the central distinction of the paper: a target regime (\(\theta_\star\)) may
be set at an identifiable moment while the effective regime (\(\theta\))
governing actual voter-flow dynamics lags behind by a measurable and
policy-relevant delay.

Persistent alienation changes the natural radical-free reference state.
Instead of the purely centrist equilibrium, the frozen system has a
centrist--alienated equilibrium with positive alienation level. Its
local stability is governed by a Perron--Frobenius threshold. That
threshold is the correct local object, but it need not classify the
entire frozen phase portrait. In the symmetric benchmark studied below,
a locally stable radical-free equilibrium can coexist with a stable
positive equilibrium below the local instability threshold. This loss of
global one-threshold behaviour is precisely why the adiabatic theorem
proved here is local in state space.

The paper has two main substantive contributions and two supporting analytical
results.
\begin{enumerate}[label=\textup{(\roman*)},leftmargin=2.5em]

\item \emph{Below-threshold bistability and hysteresis-type path dependence.}
In the symmetric benchmark, persistent alienation can generate a stable
positive equilibrium coexisting with the locally stable radical-free branch,
with the two attractors separated by a saddle born at a saddle-node
bifurcation of positive equilibria. Thus \(\mathcal R_{\mathrm{LA}}\) is a
local, not global, criterion: restoring the local threshold does not by
itself eliminate the radicalised attractor.
Analytically, bistability persists on an open neighbourhood of the
symmetric baseline by hyperbolic persistence; numerically, it survives
moderate departures from left--right symmetry
(Figure~\ref{fig:asymm_bistability_map}).

\item \emph{Two-stage delay under implementation lag.}
If the target regime crosses the frozen instability threshold transversally,
then the effective regime can cross only after an explicit delay of order
\(O(\kappa_\theta^{-1})\). This parameter-space delay
\(t_{\mathrm{eff}}-t_\star\) is distinct from the further state-space delay
\(t_{\mathrm{obs}}-t_{\mathrm{eff}}\), which is observed numerically to be
substantially larger and much less sensitive to \(\kappa_\theta\).
A robustness check across alternative visibility thresholds confirms
that this qualitative decomposition is independent of the specific
criterion used to define the visible state response (Table~\ref{tab:obs_delay}).

\item \emph{Perron--Frobenius threshold and frozen equilibrium geometry.}
For each frozen environment, local stability of the radical-free
centrist--alienated equilibrium is governed by the threshold
\(\mathcal R_{\mathrm{LA}}\). Positive frozen equilibria are organised by a
Perron--Frobenius surface together with a scalar compatibility relation,
separating the determination of \((C^*,A^*)\) from the reconstruction of
radical shares via the Perron eigenvector.

\item \emph{Local adiabatic tracking below threshold.}
Away from the threshold surface, and under uniformly subcritical slow
effective drift, trajectories that start near the radical-free branch remain
close to it, with a deviation bound proportional to the drift speed. This
statement is necessarily local in state space because the bistability
in~(i) rules out any global tracking theorem for all initial data.

\end{enumerate}
Items~(i)--(ii) are the substantive contributions; items~(iii)--(iv) provide
the analytical framework. The paper does not claim a new general-purpose
technique. The limiting case \(\eta=0\) recovers the threshold picture of
\cite{omelchenko2026threshold}; the alienation reservoir and the
target/effective split introduce two qualitative effects absent from that
model: below-threshold coexistence and a two-stage delay decomposition.
Although the application language is political, the mathematical structure is
broader: a conserved nonlinear compartmental system with a local frozen
threshold, slow parameter drift, and delayed effective response.

The remainder of the paper is organised as follows.
Section~\ref{sec:model} formulates the model and derives the frozen
local threshold. Section~\ref{sec:symm} studies the symmetric benchmark
and exhibits below-threshold bistability.
Section~\ref{sec:adiabatic} proves local adiabatic tracking away from
the threshold surface. Section~\ref{sec:delay} establishes explicit
bounds for delayed threshold passage of the effective regime.
Section~\ref{sec:discussion} concludes.

\section{Model and Frozen Threshold Geometry}\label{sec:model}

We consider a conserved electorate with four compartments: left-radical 
support $L$, right-radical support $R$, centrist mainstream $C$, and 
latent alienation $A$---a persistent reservoir of politically disaffected 
voters distinct from short-run abstention. The conservation law
\[
L(t)\ge0,\qquad R(t)\ge0,\qquad A(t)\ge0,\qquad
C(t)=1-L(t)-R(t)-A(t)\ge0
\]
defines the state space as the standard three-dimensional simplex
\begin{equation}\label{eq:state_space}
  \mathcal T
  :=
  \{(L,R,A)\in\mathbb R^3:\ L\ge0,\ R\ge0,\ A\ge0,\ L+R+A\le1\}.
\end{equation}

\begin{figure}[t]
\centering
\begin{tikzpicture}[
  box/.style={draw, rounded corners, minimum width=2.2cm,
              minimum height=0.9cm, align=center},
  arr/.style={->, >=stealth, thick},
  every node/.style={font=\small}
]
\node[box] (C) at (0,0)    {Centrist\\$C$};
\node[box] (L) at (-4,2.5) {Left-radical\\$L$};
\node[box] (R) at ( 4,2.5) {Right-radical\\$R$};
\node[box] (A) at (0,-2.5) {Alienated\\$A$};
\draw[arr] (C) -- node[left,  pos=0.15] {$\alpha_L CL + \gamma_{RL}CR$} (L);
\draw[arr] (C) -- node[right, pos=0.15] {$\alpha_R CR + \gamma_{LR}CL$} (R);
\draw[arr] (L) to[bend left=18]  node[above, pos=0.4] {$\mu_L$} (C);
\draw[arr] (R) to[bend right=18] node[above, pos=0.4] {$\mu_R$} (C);
\draw[arr] (C) -- node[right] {$\eta C$} (A);
\draw[arr] (A) to[bend left=15] node[left] {$\rho A$} (C);
\draw[arr] (A) to[bend left=50]
  node[left, pos=0.5] {$\delta_L AL$} (L);
\draw[arr] (A) to[bend right=50]
  node[right, pos=0.5] {$\delta_R AR$} (R);
\end{tikzpicture}
\caption{Compartment flow diagram with mean-field closures.
Flows \(\eta C\) and \(\rho A\) are absent from the \(\eta=0\) companion model.}
\label{fig:compartments}
\end{figure}

The voter-flow dynamics are driven by an \emph{effective} parameter path
\[
\theta(t)=
(\alpha_L,\alpha_R,\gamma_{LR},\gamma_{RL},
  \mu_L,\mu_R,\delta_L,\delta_R,\eta,\rho)(t)
\in\mathcal P,
\]
where $\mathcal P\subset\mathbb R^{10}$ is a compact convex set of admissible
regimes. For each $\theta\in\mathcal P$ we define
\begin{equation}\label{eq:matrices_model}
  K(\theta)=
  \begin{pmatrix}
    \alpha_L & \gamma_{RL}\\
    \gamma_{LR} & \alpha_R
  \end{pmatrix},
  \qquad
  M(\theta)=
  \begin{pmatrix}
    \mu_L & 0\\
    0 & \mu_R
  \end{pmatrix},
  \qquad
  D(\theta)=
  \begin{pmatrix}
    \delta_L & 0\\
    0 & \delta_R
  \end{pmatrix},
\end{equation}
where \(K(\theta)\) has strictly positive entries, \(M(\theta)\) and
\(D(\theta)\) have strictly positive diagonal entries, and
\(\eta(\theta),\rho(\theta)>0\). The matrix $K$ collects recruitment
from the centrist pool: the diagonal entries $\alpha_L$, $\alpha_R$
represent direct recruitment, and the off-diagonal entry $K_{ij}$
represents the rate at which growth in bloc $j$ drives centrists
toward bloc $i$ through reactive cross-polarisation; $M$ contains deradicalisation rates; $D$ contains
mobilisation rates from alienation into the two radical blocs; $\eta$ is
the chronic alienation rate; and $\rho$
governs the relaxation of alienation back toward effective political
engagement.

Let
\[
v(t):=(L(t),R(t))^\top,\qquad \mathbf 1:=(1,1)^\top,\qquad
C(t)=1-\mathbf 1^\top v(t)-A(t).
\]
The model reads
\begin{equation}\label{eq:model}
\begin{aligned}
  \dot v
  &=
  \bigl(C(t)K(\theta(t))+A(t)D(\theta(t))-M(\theta(t))\bigr)\,v,
  \\
  \dot A
  &=
  \eta(\theta(t))\,C(t)
  -
  A(t)\,\mathbf 1^\top D(\theta(t))\,v
  -
  \rho(\theta(t))\,A(t).
\end{aligned}
\end{equation}
The bilinear terms in \eqref{eq:model} are mean-field mass-action closures.
The term \(C K(\theta)v\) represents recruitment from the centrist pool through
interactions with existing radical milieus, whereas \(A D(\theta)v\) represents
mobilisation from latent alienation in the presence of radical organisation.
The term \(\eta(\theta)C\) is the new mechanism relative to the shock-based
companion model: even without an acute crisis, the centrist pool feeds a
persistent alienated reservoir that can later be mobilised into radical
support. This interpretation is consistent with compartmental election models
\cite{volkening2020} and with related mean-field approaches in social dynamics
\cite{castellano2009,baumann2020}. The political sociology literature
documents the empirical counterparts of these flows: chronic alienation
corresponds to the long-run growth in electoral non-participation
\cite{mair2013,norris2019}, while mobilisation from alienation into radical
support has been linked to the conversion of habitual non-voters into
populist-party supporters \cite{kriesi2012,norris2019}.

\begin{remark}[Political interpretation and empirical scope]
\label{rem:proxies}
Table~\ref{tab:proxies} summarises the political interpretation of each
parameter and a plausible observable proxy. The point of the mapping is not
calibration in the present paper but case anchoring. The quantities collected
in \(\theta\) should be read as policy-sensitive levers rather than fixed
traits of the electorate: \(\alpha_i\) and \(\gamma_{ij}\) summarise the
current permeability of the political and information environment to direct
recruitment and reactive polarisation, \(\mu_i\) summarises deradicalisation
or mainstream reabsorption capacity, \(\eta\) and \(\rho\) govern the
medium-run production and reabsorption of chronic alienation, and
\(\kappa_\theta\) measures the speed with which announced institutional change
becomes operative. The intended empirical scope is therefore a class of cases
in which reforms are decided or announced at a visible date but their
effective impact on voter flows is delayed by implementation, enforcement, or
behavioural adjustment. Within that scope, \(1/\kappa_\theta\) sets the scale
of the implementation-lag bound derived below.
\end{remark}

\begin{table}[t]
\centering
\caption{Parameters, political interpretation, and plausible empirical
proxies.}
\label{tab:proxies}
\begin{tabular}{@{}llp{5.5cm}@{}}
\toprule
Parameter & Political interpretation & Plausible proxy \\
\midrule
\(\alpha_{L},\alpha_{R}\) & Direct centrist-to-radical recruitment &
  Election-to-election change in radical party vote share
  \cite{kriesi2012} \\
\(\gamma_{LR},\gamma_{RL}\) & Reactive cross-polarisation &
  Correlated shifts between left- and right-radical vote shares
  across electoral cycles \\
\(\mu_{L},\mu_{R}\) & Party exit / deradicalisation rate &
  Decline in radical party membership or vote share between elections \\
\(\eta\) & Chronic alienation inflow &
  Annual rise in habitual non-voters among previously active cohorts
  \cite{mair2013} \\
\(\rho\) & Re-engagement rate &
  Return rate to centrist-party voting in panel surveys
  \cite{norris2019} \\
\(\delta_{L},\delta_{R}\) & Mobilisation from alienation &
  Conversion of habitual non-voters into radical-party voters
  \cite{norris2019,kriesi2012} \\
\(\kappa_\theta\) & Institutional implementation speed &
  Inverse of mean lag between policy adoption and measurable
  change in party-system regulation or media access \\
\bottomrule
\end{tabular}
\end{table}

Throughout the paper, the state equations are driven by the effective regime
$\theta(\cdot)$. In Sections~\ref{sec:adiabatic}--\ref{sec:delay} this regime
will itself be coupled to a \emph{target} regime $\theta_\star(t)$ through the
relaxation law
\begin{equation}\label{eq:theta_relax}
  \dot\theta=\kappa_\theta\bigl(\theta_\star(t)-\theta(t)\bigr),
  \qquad \kappa_\theta>0.
\end{equation}
The distinction between $\theta_\star$ and $\theta$ is the mathematical
representation of implementation lag: political or institutional change may be
announced or intended before it becomes effective in the electorate dynamics.
For the moment, however, $\theta(\cdot)$ may be any absolutely continuous path
in $\mathcal P$.

\begin{proposition}[Forward invariance and global well posedness]
\label{prop:forward_invariant}
For every absolutely continuous parameter path $\theta:[0,\infty)\to\mathcal P$
and every initial condition $(L_0,R_0,A_0)\in\mathcal T$, the system
\eqref{eq:model} has a unique global solution
\[
  (L(t),R(t),A(t))\in\mathcal T
  \qquad\text{for all }t\ge0.
\]
\end{proposition}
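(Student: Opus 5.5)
The plan is the standard Carathéodory argument: local existence and uniqueness, then forward invariance of the compact set \(\mathcal T\), which rules out finite-time blow-up. Write the system as \(\dot x=F(x,\theta(t))\) with \(x=(L,R,A)\). Since \(\theta\) is absolutely continuous on \([0,\infty)\), it is continuous, so \(t\mapsto\theta(t)\in\mathcal P\) is measurable and bounded. The vector field \(F\) is polynomial (quadratic) in \(x\), with coefficients that depend continuously on \(\theta\). Because \(\mathcal P\) is compact, on any bounded neighbourhood of \(\mathcal T\) the map \(F\) is bounded and Lipschitz in \(x\), uniformly in \(t\). The Carathéodory (here in fact classical, since \(\theta\) is continuous) Picard--Lindelöf theorem then gives a unique maximal solution on \([0,T_{\max})\). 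The standard continuation criterion says that if \(T_{\max}<\infty\) the solution leaves every compact set. It therefore suffices to show that the solution stays in \(\mathcal T\).

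For invariance I will treat each face of \(\mathcal T\) separately. The variable-by-variable step is the following.
\begin{itemize}
\item The \(L\) and \(R\) equations are of the form \(\dot L=a_L(t)L\) and \(\dot R=a_R(t)R\). Here \(a_L=CK_{11}+AD_{11}-\mu_L\) plus a cross term \(C\gamma_{RL}R\), which is nonnegative while \(C,R\ge0\). More cleanly, \(\dot v=B(t)v\) with \(B=CK+AD-M\).
\item While \(C\ge0\) and \(A\ge0\), the matrix \(B\) is Metzler (nonnegative off-diagonal entries \(C\gamma\)). Its fundamental matrix is then entrywise nonnegative, so \(v\ge0\) is preserved.
\item Alternatively, one can use the variation-of-constants formula \(L(t)=L_0\exp(\int a_L)+\int(\ldots)C\gamma_{RL}R\ge0\).
\item For \(A\), at \(A=0\) we have \(\dot A=\eta C\ge0\).
\item For \(C=1-L-R-A\), summing the equations gives \(\dot C=-\dot L-\dot R-\dot A\). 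At \(C=0\) the \(CKv\) and \(\eta C\) terms vanish, and \(\dot C=\mathbf 1^\top Mv+\rho A\ge0\), since the \(ADv\) terms cancel between the \(v\) and \(A\) equations.
\end{itemize}

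The main obstacle is to make this face-by-face inequality argument rigorous. The inequalities on each face hold only while the other coordinates are nonnegative, so a naive argument is circular, and tangency on a face does not by itself exclude exit. I will avoid this with a modification trick. Replace \(C\), \(A\), \(L\), \(R\) inside the coefficients by their positive parts \(C^+\), \(A^+\), \(L^+\), \(R^+\), obtaining a locally Lipschitz modified system \(\tilde F\). For this system, each of \(L,R,A,C\) satisfies a linear equation with a nonnegative source, of the form \(\dot y=a(t)y+s(t)\) with \(s\ge0\), along any solution:
\begin{itemize}
\item \(\dot A=-(\mathbf 1^\top Dv^++\rho)A+\eta C^+\);
\item \(\dot L=(\ldots)L+C^+\gamma_{RL}R^+\);
\item \(\dot C=-(\ldots)C+\mathbf 1^\top Mv^++\rho A^+\) after rewriting; this identity needs care, and the modification must be chosen so that the \(C\) equation keeps this structure.
\end{itemize}
Integrating-factor representations then show that the solution of the modified system starting in \(\mathcal T\) keeps all four quantities nonnegative. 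On \(\mathcal T\) the two systems coincide, so by uniqueness the original solution is this one and stays in \(\mathcal T\).

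An equivalent fallback is the Nagumo/Bony tangency criterion for the closed convex set \(\mathcal T\). On each face the inward-normal component of \(F\) is \(\ge0\), and for Lipschitz fields that suffices for invariance.

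Boundedness of \(\mathcal T\) then gives \(T_{\max}=\infty\), which completes the proof.
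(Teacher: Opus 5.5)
Your proposal is correct and follows the paper's route: local well-posedness from local Lipschitz continuity, the same face computations (including the cancellation of the $ADv$ terms on $C=0$), and compactness of $\mathcal T$ to rule out blow-up. The paper simply asserts invariance from the non-strict inward-pointing face conditions, which is the Nagumo criterion you cite as a fallback; your positive-part modification is a valid extra layer of rigour for that step, provided you establish $L,R,A\ge0$ first, since the source term in the $C$-equation is nonnegative only afterwards.
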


\begin{proof}
See \ref{app:prop_forward_invariant}.
\end{proof}

We now pass to the \emph{frozen} problem, in which $\theta(t)\equiv\bar\theta$
is held constant. The frozen analysis identifies the equilibrium structure
and local stability thresholds that serve as the reference geometry for
the slow-drift and delay results of
Sections~\ref{sec:adiabatic}--\ref{sec:delay}. Fix $\bar\theta\in\mathcal P$
and replace $\theta(t)$ in \eqref{eq:model} by the constant value $\bar\theta$.
Writing, for brevity,
\[
K:=K(\bar\theta),\qquad M:=M(\bar\theta),\qquad D:=D(\bar\theta),\qquad
\eta:=\eta(\bar\theta),\qquad \rho:=\rho(\bar\theta),
\]
the frozen system becomes
\begin{equation}\label{eq:model_frozen}
\begin{aligned}
  \dot v &= (CK+AD-M)v,\\
  \dot A &= \eta C-A\,\mathbf 1^\top Dv-\rho A,
\end{aligned}
\qquad
C=1-\mathbf 1^\top v-A.
\end{equation}
Because the chronic alienation flow $\eta C$ remains active even in the absence
of radical support, the natural radical-free reference state is no longer the
purely centrist point. Instead, one obtains a centrist--alienated equilibrium.

\begin{theorem}[Frozen local threshold]\label{thm:frozen_threshold}
For each fixed $\bar\theta\in\mathcal P$, the frozen system
\eqref{eq:model_frozen} has a unique radical-free equilibrium
\begin{equation}\label{eq:E_dagger}
E^\dagger(\bar\theta)=(0,0,A^\dagger),
\qquad
A^\dagger=\frac{\eta}{\eta+\rho}\in(0,1),
\qquad
C^\dagger=1-A^\dagger=\frac{\rho}{\eta+\rho}\in(0,1).
\end{equation}
The Jacobian at \(E^\dagger(\bar\theta)\) is block triangular, with radical block
\begin{equation}\label{eq:Jrad}
  J_{\mathrm{rad}}(\bar\theta)
  =
  C^\dagger K(\bar\theta)+A^\dagger D(\bar\theta)-M(\bar\theta).
\end{equation}
Define
\begin{equation}\label{eq:RLA}
  \mathcal R_{\mathrm{LA}}(\bar\theta)
  :=
  \lambda_{\mathrm{PF}}\!\left(
    M(\bar\theta)^{-1}
    \bigl[
      C^\dagger K(\bar\theta)+A^\dagger D(\bar\theta)
    \bigr]
  \right).
\end{equation}
Then:
\[
\mathcal R_{\mathrm{LA}}(\bar\theta)<1
\iff
E^\dagger(\bar\theta)\ \text{is locally asymptotically stable},
\]
\[
\mathcal R_{\mathrm{LA}}(\bar\theta)=1
\iff
E^\dagger(\bar\theta)\ \text{is nonhyperbolic},
\]
\[
\mathcal R_{\mathrm{LA}}(\bar\theta)>1
\iff
E^\dagger(\bar\theta)\ \text{is unstable}.
\]
\end{theorem}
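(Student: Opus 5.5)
Setting $v=0$ in \eqref{eq:model_frozen} makes the first equation vanish identically. The second equation reduces to $\eta(1-A)-\rho A=0$, whose unique solution is $A^\dagger=\eta/(\eta+\rho)\in(0,1)$ because $\eta,\rho>0$. This gives existence and uniqueness of $E^\dagger(\bar\theta)$ among radical-free points, with $C^\dagger=\rho/(\eta+\rho)$.

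Next I compute the Jacobian in coordinates $(v,A)$ at $v=0$. Since $\dot v=(CK+AD-M)v$ is linear in $v$ with a state-dependent coefficient matrix, its derivative with respect to $A$ equals $(-K+D)v$. This vanishes at $v=0$, while its derivative with respect to $v$ is $C^\dagger K+A^\dagger D-M=J_{\mathrm{rad}}$. The $A$-equation contributes the row $\bigl(-\eta\mathbf 1^\top-A^\dagger\mathbf 1^\top D,\;-\eta-\rho\bigr)$. The Jacobian is therefore block lower triangular, and its spectrum is $\sigma(J_{\mathrm{rad}})\cup\{-(\eta+\rho)\}$. The scalar eigenvalue is strictly negative, so stability, instability and hyperbolicity are decided entirely by $J_{\mathrm{rad}}$.

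The core step is the standard M-matrix equivalence (as in van den Driessche--Watmough): write $J_{\mathrm{rad}}=F-M$ with $F:=C^\dagger K+A^\dagger D$. Here $F$ is entrywise positive, since $K>0$, $C^\dagger>0$ and $D\ge0$ is diagonal, and $M$ is a positive diagonal matrix. Then $J_{\mathrm{rad}}$ is a Metzler (indeed irreducible) matrix, so its spectral abscissa $s(J_{\mathrm{rad}})$ is a real, simple Perron eigenvalue. I will prove
\[
s(F-M)<0\iff \lambda_{\mathrm{PF}}(M^{-1}F)<1,\qquad s(F-M)=0\iff \lambda_{\mathrm{PF}}(M^{-1}F)=1,
\]
and hence also the $>$ case. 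The cleanest route is the $2\times2$ computation. The matrix $F-M$ has positive off-diagonal entries, so both eigenvalues are real, and $s(F-M)<0$ iff $\operatorname{tr}<0$ and $\det>0$. Meanwhile $\lambda_{\mathrm{PF}}(M^{-1}F)<1$ iff $\det(I-M^{-1}F)>0$ and $\operatorname{tr}(I-M^{-1}F)>0$, which for the positive $2\times 2$ matrix $M^{-1}F$ is equivalent to both diagonal entries of $M^{-1}F$ being $<1$ together with the determinant condition.

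To avoid sign bookkeeping, I will use a continuity argument instead. Consider $G(s):=M^{-1}F/s$ for $s>0$. Its Perron root is strictly decreasing in $s$. Also $F-sM$ has spectral abscissa $0$ iff $\lambda_{\mathrm{PF}}(M^{-1}F)=s$, by taking the Perron eigenvector of $M^{-1}F$ and noting that a positive eigenvector of a Metzler irreducible matrix must belong to its abscissa. Finally $s\mapsto s(F-sM)$ is strictly decreasing. Hence $s(F-M)$ has the same sign as $\lambda_{\mathrm{PF}}(M^{-1}F)-1$.

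With this equivalence in hand, the three cases follow. If $\mathcal R_{\mathrm{LA}}<1$, all eigenvalues have negative real part, so $E^\dagger$ is locally asymptotically stable by linearisation. If $\mathcal R_{\mathrm{LA}}=1$, $0$ is an eigenvalue, so $E^\dagger$ is nonhyperbolic. If $\mathcal R_{\mathrm{LA}}>1$, there is a positive real eigenvalue, so $E^\dagger$ is unstable, even relative to $\mathcal T$, since the associated eigenvector has positive $v$-components and points into the simplex.

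The converses need one more observation. At $\mathcal R_{\mathrm{LA}}=1$, stability cannot be decided by linearisation, but the biconditionals still hold because the three threshold cases are mutually exclusive and exhaustive. The one subtle direction is that local asymptotic stability implies $\mathcal R_{\mathrm{LA}}<1$. For this I must rule out asymptotic stability at $\mathcal R_{\mathrm{LA}}=1$, which the biconditional phrasing requires. This is the main obstacle. I would handle it by a center-manifold or direct sign argument. At $\mathcal R_{\mathrm{LA}}=1$, take the positive left Perron vector $w$ of $J_{\mathrm{rad}}$ and set $u=w^\top v$. Then
\[
\dot u=w^\top\bigl((C-C^\dagger)K+(A-A^\dagger)D\bigr)v.
\]
Along the center direction $A\approx A^\dagger$ and $C-C^\dagger\approx-\mathbf 1^\top v+O(|v|)$, which yields $\dot u\approx -c\,u^2$ with $c$ of indeterminate sign in general. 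Since that sign is indeterminate, I would fall back on interpreting "$\iff$ nonhyperbolic" as the classification of the linearisation. The stable and unstable statements are then understood as hyperbolic linear stability, which is consistent with the block-triangular structure.
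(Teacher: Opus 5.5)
Your argument is correct and is essentially the paper's proof: you reduce to the radical block of the block-triangular Jacobian and use the Metzler/M-matrix equivalence between the sign of $s(J_{\mathrm{rad}})$ and the sign of $\mathcal R_{\mathrm{LA}}-1$, which the paper simply cites and you prove directly via strict monotonicity of $\tau\mapsto s(F-\tau M)$ (with your $F=C^\dagger K+A^\dagger D$). Your worry at $\mathcal R_{\mathrm{LA}}=1$ is justified, and your fallback to the linearised, spectral reading is not merely a convenience: if your center-manifold coefficient has $c>0$, then $E^\dagger$ is nonhyperbolic yet asymptotically stable relative to $\mathcal T$, because $u=w^\top v\ge0$ there, while for $c<0$ it is unstable, so the literal nonlinear biconditionals fail at the threshold and the linearised reading is the one the paper's proof adopts implicitly when it equates the sign of $s(J_{\mathrm{rad}})$ with the stability type.
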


\begin{proof}
On the radical-free set $L=R=0$, one has $C=1-A$, and the $A$-equation reduces
to
\[
  \dot A=\eta(1-A)-\rho A,
\]
which has the unique equilibrium \eqref{eq:E_dagger}. Linearising
\eqref{eq:model_frozen} at $E^\dagger(\bar\theta)$ yields a block-triangular
Jacobian of the form
\[
  J(E^\dagger)=
  \begin{pmatrix}
    J_{\mathrm{rad}}(\bar\theta) & 0\\
    * & -(\eta+\rho)
  \end{pmatrix},
\]
so local stability is determined entirely by the $2\times2$ radical block
\eqref{eq:Jrad}. The matrix
\[
  N^\dagger:=C^\dagger K(\bar\theta)+A^\dagger D(\bar\theta)
\]
is strictly positive, hence
\[
  J_{\mathrm{rad}}(\bar\theta)=N^\dagger-M(\bar\theta)
\]
is an irreducible Metzler matrix. By the standard
Perron--Frobenius/$M$-matrix criterion,
\[
s\!\left(J_{\mathrm{rad}}(\bar\theta)\right)<0,\ =0,\ >0
\quad\Longleftrightarrow\quad
\lambda_{\mathrm{PF}}\!\left(M(\bar\theta)^{-1}N^\dagger\right)<1,\ =1,\ >1,
\]
which is precisely the stated trichotomy.
\end{proof}

\begin{remark}[Limit \(\eta\downarrow0\) and connection to the companion model]
Although the admissible regime set \(\mathcal P\) is defined by the standing
assumption \(\eta>0\), it is useful to record the limiting relation with the
companion model. If all frozen parameters except \(\eta\) are kept fixed and
\(\eta\downarrow0\), then from \eqref{eq:E_dagger} one obtains
\[
  A^\dagger\to0,
  \qquad
  C^\dagger\to1,
\]
so the radical-free centrist--alienated equilibrium tends to the purely
centrist equilibrium. At the same time, the threshold formula \eqref{eq:RLA}
reduces to
\[
  \lambda_{\mathrm{PF}}\!\bigl(M^{-1}K\bigr),
\]
which is exactly the recruitment-versus-deradicalisation threshold of the
companion model \cite{omelchenko2026threshold}. In the symmetric
specialisation introduced in Section~\ref{sec:symm}, the corresponding
threshold further reduces to
\[
  \frac{\beta}{\mu}.
\]
Thus the present framework extends the \(\eta=0\) model by allowing a
persistent alienated reservoir.
\end{remark}

Theorem~\ref{thm:frozen_threshold} gives the correct \emph{local} threshold at
the radical-free centrist--alienated branch. Although the present setting is
political rather than epidemiological, the quantity
\(\mathcal R_{\mathrm{LA}}\) plays the same formal role as the basic
reproduction number in compartmental disease models
\cite{diekmann1990,vandendriessche2002}: local stability of the radical-free
branch is governed by whether the Perron root of the effective recruitment
operator exceeds one. This local threshold, however, need not classify the
entire frozen phase portrait. To see why, it is useful to describe the
geometry of positive frozen equilibria. The next proposition shows that such
equilibria are not arbitrary: their centrist and alienation shares must satisfy
one algebraic surface condition, and their radical shares are then determined
by the corresponding Perron eigenvector.

\begin{proposition}[Perron--Frobenius geometry of positive frozen equilibria]
\label{prop:pf_surface}
Fix $\bar\theta\in\mathcal P$, and retain the shorthand
\[
K:=K(\bar\theta),\qquad M:=M(\bar\theta),\qquad D:=D(\bar\theta),\qquad
\eta:=\eta(\bar\theta),\qquad \rho:=\rho(\bar\theta).
\]
For $(C,A)$ with
\[
  C>0,\qquad A\ge0,\qquad C+A<1,
\]
define
\[
  q(C,A):=1-C-A,
  \qquad
  B(C,A):=M^{-1}(CK+AD).
\]
Whenever $\lambda_{\mathrm{PF}}(B(C,A))=1$, let $u(C,A)\gg0$ denote the
corresponding Perron eigenvector normalised by
\[
  \mathbf 1^\top u(C,A)=1.
\]
Then a point $(L^*,R^*,A^*)\in\mathcal T$ with
\[
  L^*>0,\qquad R^*>0,\qquad C^*:=1-L^*-R^*-A^*>0
\]
is an equilibrium of \eqref{eq:model_frozen} if and only if
\begin{equation}\label{eq:pf_surface}
  \lambda_{\mathrm{PF}}\bigl(B(C^*,A^*)\bigr)=1
\end{equation}
and
\begin{equation}\label{eq:pf_compatibility}
  \eta C^*
  =
  A^*\Bigl[\rho+q(C^*,A^*)\,\mathbf 1^\top D\,u(C^*,A^*)\Bigr],
\end{equation}
with
\begin{equation}\label{eq:pf_reconstruction}
  \binom{L^*}{R^*}=q(C^*,A^*)\,u(C^*,A^*).
\end{equation}
In particular, every positive frozen equilibrium lies on the
Perron--Frobenius surface \eqref{eq:pf_surface}, and the remaining equilibrium
condition reduces to the scalar compatibility relation
\eqref{eq:pf_compatibility}. Equations \eqref{eq:pf_surface} and
\eqref{eq:pf_compatibility} form a closed system for the two unknowns
$(C^*,A^*)$: one seeks a pair for which the Perron root of $B(C^*,A^*)$
equals one and the compatibility condition holds simultaneously; once
such a pair is found, the radical shares are recovered from
\eqref{eq:pf_reconstruction}.
\end{proposition}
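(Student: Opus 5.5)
The argument is a direct two-way verification built on the Perron--Frobenius theorem for strictly positive $2\times2$ matrices. First observe that $B(C,A)=M^{-1}(CK+AD)$ is strictly positive whenever $C>0$ and $A\ge0$, because $M^{-1}$ is a positive diagonal matrix, $K\gg0$, and $D$ has nonnegative entries. Hence $B$ has a simple Perron root $\lambda_{\mathrm{PF}}(B)$. Its eigenvector can be chosen strictly positive and is unique up to scaling. Moreover, $\lambda_{\mathrm{PF}}$ is the only eigenvalue of $B$ that admits a nonnegative eigenvector. This last fact carries the whole argument.

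For the forward direction, let $(L^*,R^*,A^*)$ be an equilibrium with $L^*,R^*,C^*>0$ and set $v^*=(L^*,R^*)^\top\gg0$. The $v$-equation gives $(C^*K+A^*D-M)v^*=0$, and multiplying by $M^{-1}$ turns this into $B(C^*,A^*)v^*=v^*$. So $v^*$ is a strictly positive eigenvector of $B(C^*,A^*)$ with eigenvalue $1$. By the uniqueness property above, $1=\lambda_{\mathrm{PF}}(B(C^*,A^*))$, which is \eqref{eq:pf_surface}. Since the Perron eigenvalue is simple, $v^*$ is a scalar multiple of $u(C^*,A^*)$. Conservation gives $\mathbf 1^\top v^*=1-C^*-A^*=q(C^*,A^*)$, and because $\mathbf 1^\top u=1$ the scalar equals $q$, which is \eqref{eq:pf_reconstruction}. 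Note that $q>0$ here since $L^*+R^*>0$, so the pair $(C^*,A^*)$ lies in the stated domain. Substituting $v^*=q\,u$ into $\dot A=0$, namely $\eta C^*-A^*\mathbf 1^\top Dv^*-\rho A^*=0$, yields \eqref{eq:pf_compatibility}.

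For the converse, suppose $(C^*,A^*)$ satisfies $C^*>0$, $A^*\ge0$, $C^*+A^*<1$, together with \eqref{eq:pf_surface} and \eqref{eq:pf_compatibility}. Define $(L^*,R^*)$ by \eqref{eq:pf_reconstruction}. Since $q>0$ and $u\gg0$, we get $L^*,R^*>0$. Also $L^*+R^*=q$, so $1-L^*-R^*-A^*=C^*>0$, which confirms that the point lies in $\mathcal T$ with the correct $C^*$. The relation $Bu=u$ gives $(C^*K+A^*D-M)v^*=0$, and \eqref{eq:pf_compatibility} is exactly $\dot A=0$. Hence the point is an equilibrium.

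The only step needing care is the identification of the eigenvalue $1$ with the Perron root in the forward direction. I would justify it by citing the standard Perron--Frobenius fact for positive matrices. Alternatively, a direct argument works: take the positive left Perron eigenvector $w$ of $B$, so $w^\top B=\lambda_{\mathrm{PF}}w^\top$. Then $w^\top v^*=w^\top Bv^*=\lambda_{\mathrm{PF}}\,w^\top v^*$, and since $w^\top v^*>0$ this forces $\lambda_{\mathrm{PF}}=1$. The final remark of the proposition, that \eqref{eq:pf_surface}--\eqref{eq:pf_compatibility} form a closed system in $(C^*,A^*)$, is then immediate from the equivalence.
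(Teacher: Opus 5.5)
Your proposal is correct and follows the paper's proof essentially step for step. In the forward direction, $Bv^*=v^*$ with $v^*\gg0$ forces $\lambda_{\mathrm{PF}}=1$, normalising by $q$ gives the reconstruction, and substituting into the $A$-equation gives compatibility; the converse is direct verification. Your explicit check that the reconstructed point lies in $\mathcal T$ with the correct $C^*$, and your left-eigenvector argument for identifying $1$ with the Perron root, fill in details the paper leaves implicit.
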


\begin{proof}
Let $(L^*,R^*,A^*)$ be a positive frozen equilibrium and set
\[
  v^*:=\binom{L^*}{R^*},
  \qquad
  C^*:=1-L^*-R^*-A^*.
\]
From the $v$-equation in \eqref{eq:model_frozen} we obtain
\[
  (C^*K+A^*D-M)v^*=0,
\]
hence
\[
  B(C^*,A^*)\,v^*=v^*.
\]
Since $v^*\gg0$, the Perron--Frobenius theorem implies
$\lambda_{\mathrm{PF}}(B(C^*,A^*))=1$. Normalising
\[
  u(C^*,A^*):=\frac{v^*}{L^*+R^*}
\]
gives the positive Perron eigenvector with
$\mathbf 1^\top u(C^*,A^*)=1$, and since
\[
  L^*+R^*=1-C^*-A^*=q(C^*,A^*),
\]
we obtain \eqref{eq:pf_reconstruction}. Substituting
\[
  v^*=q(C^*,A^*)\,u(C^*,A^*)
\]
into the frozen $A$-equation yields \eqref{eq:pf_compatibility}.

Conversely, suppose that $(C^*,A^*)$ satisfies \eqref{eq:pf_surface} and
\eqref{eq:pf_compatibility}, and define $(L^*,R^*)$ by
\eqref{eq:pf_reconstruction}. Since $u(C^*,A^*)$ is a Perron eigenvector of
$B(C^*,A^*)$ with eigenvalue $1$, we have
\[
  (C^*K+A^*D-M)\binom{L^*}{R^*}=0.
\]
The frozen $A$-equation then holds by \eqref{eq:pf_compatibility}, so
$(L^*,R^*,A^*)$ is an equilibrium of \eqref{eq:model_frozen}.
\end{proof}

In the symmetric benchmark studied next, the compatibility condition
\eqref{eq:pf_compatibility} reduces to an explicit quadratic, making it
possible to show concretely that the local threshold need not be globally
decisive.

\section{Symmetric Benchmark: Local Threshold, Positive Equilibria, and
Bistability}\label{sec:symm}

To understand concretely how the frozen local threshold interacts with the
global equilibrium geometry, we now impose a symmetric benchmark. This should
not be read as an exact empirical claim about European party systems, but as a
reference case in which the two radical blocs have the same recruitment,
deradicalisation, and alienation-mobilisation structure. In this setting the
general Perron--Frobenius surface description from
Proposition~\ref{prop:pf_surface} collapses to an explicit scalar equation, and
the loss of global one-threshold behaviour becomes completely transparent.

We assume
\begin{equation}\label{eq:symm_assumptions}
  \alpha_L=\alpha_R=\alpha,
  \qquad
  \gamma_{LR}=\gamma_{RL}=\gamma,
  \qquad
  \mu_L=\mu_R=\mu,
  \qquad
  \delta_L=\delta_R=\delta,
\end{equation}
and introduce the total centrist-to-radical recruitment intensity
\begin{equation}\label{eq:beta_symm}
  \beta:=\alpha+\gamma.
\end{equation}
Throughout this section the environment is frozen and symmetric.

Under \eqref{eq:symm_assumptions}, the diagonal
\[
  \Delta:=\{(L,R,A)\in\mathcal T:\ L=R\}
\]
is forward invariant. Indeed, setting \(H:=L-R\) and subtracting the frozen
\(R\)-equation from the frozen \(L\)-equation gives
\[
  \dot H=\bigl((\alpha-\gamma)C+\delta A-\mu\bigr)H.
\]
Hence \(H(0)=0\) implies \(H(t)=0\) for all forward time. Writing
\[
  L=R=P,
  \qquad
  C=1-2P-A,
\]
the dynamics on \(\Delta\) reduces to the planar system
\begin{equation}\label{eq:model_symm}
\begin{aligned}
  \dot P
  &=P\bigl[(\beta-\mu)-2\beta P+(\delta-\beta)A\bigr],\\
  \dot A
  &=\eta(1-2P-A)-(2\delta P+\rho)A,
\end{aligned}
\end{equation}
on the triangle
\begin{equation}\label{eq:F_symm}
  \mathcal F
  :=
  \{(P,A)\in\mathbb R^2:\ P\ge0,\ A\ge0,\ 2P+A\le1\}.
\end{equation}
By Proposition~\ref{prop:forward_invariant}, \(\mathcal F\) is forward
invariant.

The radical-free frozen equilibrium from
Theorem~\ref{thm:frozen_threshold} becomes
\begin{equation}\label{eq:E0_symm}
  E_0^{\mathrm{sym}}=(0,A^\dagger),
  \qquad
  A^\dagger=\frac{\eta}{\eta+\rho},
  \qquad
  C^\dagger=\frac{\rho}{\eta+\rho}.
\end{equation}
Its local threshold now admits a closed form.

\begin{proposition}[Symmetric reduction and frozen equilibrium structure]
\label{prop:symm_benchmark}
For the reduced system \eqref{eq:model_symm} the following statements hold.
Parts~\textup{(i)--(iii)} characterise the equilibrium structure, while
part~\textup{(iv)} records the Jacobian at an arbitrary positive equilibrium;
this will be used to determine stability in
Example~\ref{ex:subcritical_coexistence}.

\begin{enumerate}[label=\textup{(\roman*)},leftmargin=2.5em]
  \item The equilibrium \(E_0^{\mathrm{sym}}\) is locally asymptotically
  stable, nonhyperbolic, or unstable according as
  \begin{equation}\label{eq:RLA_symm}
    \mathcal R_{\mathrm{LA}}^{\mathrm{sym}}
    :=
    \frac{\rho\beta+\eta\delta}{\mu(\eta+\rho)}
  \end{equation}
  is \(<1\), \(=1\), or \(>1\).

  \item A point \((P,A)\in\mathcal F\) with \(P>0\) is an equilibrium of
  \eqref{eq:model_symm} if and only if
  \begin{equation}\label{eq:AofP_symm}
    A=A(P):=\frac{\eta(1-2P)}{\eta+\rho+2\delta P}
  \end{equation}
  and \(P\in(0,\tfrac12)\) satisfies
  \begin{equation}\label{eq:Q_symm}
    Q(P)=0,
  \end{equation}
  where
  \begin{equation}\label{eq:Q_explicit_symm}
    Q(P)
    :=
    4\beta\delta P^2
    +2\bigl[\beta(\rho-\delta)+\delta(\eta+\mu)\bigr]P
    +\mu(\eta+\rho)-(\rho\beta+\eta\delta).
  \end{equation}

  \item The endpoint values satisfy
  \begin{equation}\label{eq:Q_endpoints_symm}
    Q(0)=\mu(\eta+\rho)\bigl(1-\mathcal R_{\mathrm{LA}}^{\mathrm{sym}}\bigr),
    \qquad
    Q\!\left(\frac12\right)=\mu(\delta+\eta+\rho)>0.
  \end{equation}

  \item At any positive equilibrium of \eqref{eq:model_symm}, the Jacobian
  simplifies to
  \begin{equation}\label{eq:J_positive_symm}
    J(P,A)=
    \begin{pmatrix}
      -2\beta P & P(\delta-\beta)\\[3pt]
      -2(\eta+\delta A) & -(\eta+\rho+2\delta P)
    \end{pmatrix}.
  \end{equation}
\end{enumerate}
\end{proposition}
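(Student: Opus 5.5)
The plan is direct computation on the reduced planar system \eqref{eq:model_symm}, with Theorem~\ref{thm:frozen_threshold} used as a cross-check for part~(i). The four parts are handled in order.

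For (i), I linearise \eqref{eq:model_symm} at \(E_0^{\mathrm{sym}}=(0,A^\dagger)\).
\begin{itemize}
\item Because \(\dot P\) carries the factor \(P\), the Jacobian there is lower triangular.
\item Its \((1,1)\) entry is \((\beta-\mu)+(\delta-\beta)A^\dagger=\beta C^\dagger+\delta A^\dagger-\mu\), using \(C^\dagger=1-A^\dagger\).
\item Its \((2,2)\) entry is \(-(\eta+\rho)<0\).
\end{itemize}
Hence stability is decided by the sign of \(\beta C^\dagger+\delta A^\dagger-\mu\). Substituting \eqref{eq:E0_symm}, this sign is that of \(\mu(\mathcal R^{\mathrm{sym}}_{\mathrm{LA}}-1)\). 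The same number also arises from \eqref{eq:RLA}: under \eqref{eq:symm_assumptions}, \(M^{-1}(C^\dagger K+A^\dagger D)\) has equal row sums \((C^\dagger\beta+A^\dagger\delta)/\mu\), so \(\mathbf 1\) is its Perron vector. This confirms consistency with Theorem~\ref{thm:frozen_threshold}.

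For (ii), I take \(P>0\), so the \(\dot P=0\) equation reduces to \((\beta-\mu)-2\beta P+(\delta-\beta)A=0\).
\begin{itemize}
\item Solving \(\dot A=0\) for \(A\) gives \eqref{eq:AofP_symm}; the denominator \(\eta+\rho+2\delta P\) is positive.
\item Substituting this \(A\) into the \(\dot P\) bracket and multiplying by the denominator gives \((\beta-\mu-2\beta P)(\eta+\rho+2\delta P)+(\delta-\beta)\eta(1-2P)=0\).
\item Expanding and collecting powers of \(P\), this equals \(-Q(P)\). The constant term is \(\rho\beta+\eta\delta-\mu(\eta+\rho)\), and the linear coefficient is \(2[\delta\beta-\delta\mu-\beta\rho-\delta\eta]\).
\end{itemize}
Membership in \(\mathcal F\) is then routine to check.
\begin{itemize}
\item \(A(P)\ge0\) forces \(P\le\tfrac12\).
\item \(2P+A(P)\le1\) holds automatically, because \(A(P)\le 1-2P\).
\item The endpoint \(P=\tfrac12\) is excluded by (iii), since \(Q(\tfrac12)>0\).
\end{itemize}
For the converse, the same algebra run backwards shows that any root \(P\in(0,\tfrac12)\) of \(Q\), paired with \(A=A(P)\), is an equilibrium.

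For (iii), the formula for \(Q(0)\) is immediate from \eqref{eq:RLA_symm}. For \(Q(\tfrac12)\), I evaluate directly:
\[
\beta\delta+\beta(\rho-\delta)+\delta(\eta+\mu)+\mu(\eta+\rho)-\rho\beta-\eta\delta=\mu(\delta+\eta+\rho).
\]

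For (iv), I differentiate the right-hand sides of \eqref{eq:model_symm}.
\begin{itemize}
\item \(\partial_P\dot P\) equals the bracket minus \(2\beta P\). The bracket vanishes at a positive equilibrium, so this entry is \(-2\beta P\).
\item \(\partial_A\dot P=P(\delta-\beta)\).
\item \(\partial_P\dot A=-2\eta-2\delta A\).
\item \(\partial_A\dot A=-(\eta+\rho+2\delta P)\).
\end{itemize}

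I expect no real obstacle here. The only error-prone step is the expansion in (ii), which must reproduce the exact coefficients of \eqref{eq:Q_explicit_symm}. I will check it against the endpoint identities in (iii) as a consistency test.
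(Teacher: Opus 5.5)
Your proposal is correct and follows the paper's proof essentially step for step: the triangular linearisation at \(E_0^{\mathrm{sym}}\), solving \(\dot A=0\) for \(A(P)\) and clearing the positive denominator to obtain \(-Q(P)\), direct evaluation of the endpoint values, and using the vanishing of the bracket in \(\dot P\) to simplify the Jacobian. Your extra checks fill in details the paper leaves implicit: the equal-row-sum Perron vector \(\mathbf 1\) linking (i) to Theorem~\ref{thm:frozen_threshold}, and the verification that \((P,A(P))\in\mathcal F\) with \(P=\tfrac12\) excluded by \(Q(\tfrac12)>0\).
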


\begin{proof}
See \ref{app:prop_symm_benchmark}.
\end{proof}

The next proposition shows that every positive equilibrium of the full frozen
symmetric model is necessarily symmetric, so the planar reduction captures all
positive frozen equilibria.

\begin{proposition}[All positive frozen equilibria are symmetric]
\label{prop:all_positive_symm}
Assume \eqref{eq:symm_assumptions}. Let
\[
  E^*=(L^*,R^*,A^*)\in\mathcal T
\]
be a positive equilibrium of the full frozen symmetric model, with
\[
  L^*>0,\qquad R^*>0,\qquad A^*\ge0,\qquad
  C^*:=1-L^*-R^*-A^*>0.
\]
Then
\[
  L^*=R^*.
\]
Equivalently, every positive equilibrium of the full frozen symmetric model
lies on the invariant diagonal
\[
  \Delta=\{(L,R,A)\in\mathcal T:\ L=R\}.
\]
\end{proposition}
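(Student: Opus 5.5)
The plan is to work directly with the radical-share equilibrium equation from \eqref{eq:model_frozen}, exactly as in the first step of the proof of Proposition~\ref{prop:pf_surface}. At a positive equilibrium \(E^*\) with \(v^*=(L^*,R^*)^\top\gg0\) we have
\[
  (C^*K+A^*D-M)\,v^*=0 .
\]
Under the symmetric assumptions \eqref{eq:symm_assumptions}, the matrix \(C^*K+A^*D-M\) is the symmetric \(2\times2\) matrix with equal diagonal entries \(s:=\alpha C^*+\delta A^*-\mu\) and equal off-diagonal entries \(t:=\gamma C^*\). The key point is that \(t>0\), because \(K\) has strictly positive entries (so \(\gamma>0\)) and \(C^*>0\) by hypothesis. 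The \(A\)-equation plays no role here; only the \(v\)-equation is used.

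Writing the kernel condition componentwise gives
\[
  sL^*+tR^*=0,\qquad tL^*+sR^*=0 .
\]
Since \(L^*,R^*>0\), we can solve each equation for \(s\) and obtain \(s=-tR^*/L^*=-tL^*/R^*\). Dividing by \(-t\neq0\) gives \(R^*/L^*=L^*/R^*\), hence \((L^*)^2=(R^*)^2\), and positivity yields \(L^*=R^*\).

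A cleaner variant of the same argument uses the eigenbasis. Subtracting the two scalar equations gives \((s-t)(L^*-R^*)=0\), and adding them gives \((s+t)(L^*+R^*)=0\). Since \(L^*+R^*>0\), the second relation forces \(s=-t\), so \(s-t=-2t\neq0\). The first relation then forces \(L^*=R^*\). This mirrors the invariance computation for \(H=L-R\) preceding \eqref{eq:model_symm}: at a positive equilibrium, \(\dot H=0\) with a coefficient that cannot vanish.

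There is no genuine obstacle. The only point requiring care is to invoke strict positivity of \(\gamma\) and \(C^*\), so that the off-diagonal coupling \(t\) is nonzero. If \(t=0\), the matrix would be a scalar multiple of the identity, its kernel could be all of \(\mathbb R^2\), and asymmetric equilibria could not be excluded. The conclusion that \(E^*\in\Delta\) then follows immediately from the definition of \(\Delta\).
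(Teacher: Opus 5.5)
Your proposal is correct, and your ``cleaner variant'' is exactly the paper's proof: adding the two scalar equations forces $s+t=\beta C^*+\delta A^*-\mu=0$, and subtracting them gives $(s-t)(L^*-R^*)=0$ with $s-t=-2\gamma C^*<0$, so $L^*=R^*$. Your first ratio argument is an equally valid rephrasing that rests on the same fact, namely that $\gamma C^*>0$.
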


\begin{proof}
See \ref{app:prop_all_positive_symm}.
\end{proof}

To recover the stability type in the full three-dimensional system, it remains
to control the antisymmetric mode. We therefore record the following lifting
result.

\begin{proposition}[Lifting planar stability to the full symmetric model]
\label{prop:lift_symm_stability}
Let \((P^*,A^*)\) be a positive equilibrium of the reduced system
\eqref{eq:model_symm}, and let
\[
  E^*=(L^*,R^*,A^*)=(P^*,P^*,A^*)
\]
be the corresponding equilibrium of the full frozen symmetric model. Then the
linearisation at \(E^*\) splits into the planar block \(J(P^*,A^*)\) from
\eqref{eq:J_positive_symm} and one antisymmetric mode \(H=L-R\), whose
eigenvalue is
\begin{equation}\label{eq:lambda_H_symm}
  \lambda_H
  =
  (\alpha-\gamma)C^*+\delta A^*-\mu
  =
  -2\gamma C^*
  <0,
\end{equation}
where \(C^*=1-2P^*-A^*\). Consequently, the full three-dimensional stability
type of \(E^*\) is obtained from the planar one by adding one stable
eigenvalue. In particular, a planar sink is also asymptotically stable in the
full symmetric model, while a planar saddle with one unstable direction remains
a saddle with one unstable direction in the full symmetric model.
\end{proposition}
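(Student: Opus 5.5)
We plan to pass to the coordinates \(S:=L+R\), \(H:=L-R\), keeping \(A\). This change of variables is linear and invertible, so the spectrum of the Jacobian at \(E^*\) is unchanged.

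\emph{Step 1: write the full symmetric system in these coordinates.} Under \eqref{eq:symm_assumptions} we have \(K=\alpha I+\gamma(\mathbf 1\mathbf 1^\top-I)\). Applying \(K\) to \((L,R)\) gives
\[
\dot S=\bigl((\alpha+\gamma)C+\delta A-\mu\bigr)S,\qquad
\dot H=\bigl((\alpha-\gamma)C+\delta A-\mu\bigr)H,
\qquad
\dot A=\eta C-\delta A S-\rho A,
\]
with \(C=1-S-A\). The \(H\)-equation was already derived in the text; the other two follow in the same way.

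\emph{Step 2: linearise at \(E^*\), where \(H=0\).} Only \(\dot H\) carries the factor \(H\). Hence \(\partial_H\dot H=\lambda_H\) and \(\partial_{S}\dot H=\partial_A\dot H=0\). The \(S\)- and \(A\)-equations do not depend on \(H\) at all, so \(\partial_H\dot S=\partial_H\dot A=0\).

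The Jacobian is therefore block diagonal, with blocks \(\lambda_H\) and the \((S,A)\) block. Setting \(S=2P\) gives \(\dot S=2\dot P\), which is exactly system \eqref{eq:model_symm}. The \((S,A)\) block is thus similar to \(J(P^*,A^*)\) from \eqref{eq:J_positive_symm}, via the scaling \(\operatorname{diag}(2,1)\), and has the same eigenvalues.

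\emph{Step 3: evaluate \(\lambda_H\).} At a positive equilibrium the \(P\)-equation of \eqref{eq:model_symm} gives
\[
(\alpha+\gamma)C^*+\delta A^*-\mu=0,
\]
because \(P^*>0\) and the bracket equals \(\beta C+\delta A-\mu\) after substituting \(C=1-2P-A\). Subtracting this identity from the expression for \(\lambda_H\) yields
\[
\lambda_H=(\alpha-\gamma)C^*+\delta A^*-\mu=-2\gamma C^*.
\]
Since \(\gamma>0\) (all entries of \(K\) are positive) and \(C^*>0\), we get \(\lambda_H<0\).

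\emph{Step 4: transfer the stability type.} The spectrum of the full Jacobian is the planar spectrum together with one negative real eigenvalue. A planar sink has both eigenvalues with negative real part, so the full equilibrium is hyperbolic with all eigenvalues in the left half-plane. Linearised stability then gives local asymptotic stability in \(\mathcal T\); note that \(E^*\) is interior to \(\mathcal T\). A planar saddle has exactly one positive eigenvalue, and adding a negative one keeps exactly one unstable direction.

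The only step needing care is Step 2, namely checking that the cross-derivatives vanish so the decoupling is exact. This holds because \(\dot S\) and \(\dot A\) depend on \(L,R\) only through \(S\), which follows from the symmetry \(\mathbf 1^\top K=\beta\mathbf 1^\top\) and \(D=\delta I\).
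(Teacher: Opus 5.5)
Your proof is correct and follows the paper's argument. Both isolate the antisymmetric mode $H=L-R$, use the balance identity $\beta C^*+\delta A^*-\mu=0$ from the reduced $P$-equation to obtain $\lambda_H=-2\gamma C^*<0$, and read the remaining two eigenvalues off the planar block. Your explicit $(S,H,A)$ coordinates, together with the similarity via $\operatorname{diag}(2,1)$, spell out the block-diagonal decoupling that the paper only asserts.
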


\begin{proof}
See \ref{app:prop_lift_symm_stability}.
\end{proof}

\begin{corollary}[Subcriticality does not imply global attraction]
\label{cor:symm_local_not_global}
If \(\mathcal R_{\mathrm{LA}}^{\mathrm{sym}}<1\) and the quadratic
\eqref{eq:Q_symm} has a root in \((0,\tfrac12)\), then the radical-free
equilibrium \(E_0^{\mathrm{sym}}\) is locally asymptotically stable but not
globally attractive on \(\mathcal F\). In particular, the threshold
\(\mathcal R_{\mathrm{LA}}^{\mathrm{sym}}=1\) does not yield a global
one-regime classification for the frozen symmetric dynamics.
\end{corollary}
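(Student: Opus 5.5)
The plan is to combine Proposition~\ref{prop:symm_benchmark}(i) with Proposition~\ref{prop:symm_benchmark}(ii). Local asymptotic stability is immediate from part~(i), since \(\mathcal R_{\mathrm{LA}}^{\mathrm{sym}}<1\). For the failure of global attraction, take a root \(P^*\in(0,\tfrac12)\) of \(Q\) and set \(A^*:=A(P^*)\) from \eqref{eq:AofP_symm}. Part~(ii) then says that \((P^*,A^*)\) is an equilibrium of \eqref{eq:model_symm} with \(P^*>0\), provided it lies in \(\mathcal F\). A constant trajectory sitting at a point different from \(E_0^{\mathrm{sym}}\) never converges to \(E_0^{\mathrm{sym}}\), so \(E_0^{\mathrm{sym}}\) cannot attract every initial condition in \(\mathcal F\).

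The only thing to check is that \((P^*,A^*)\in\mathcal F\), and in fact that \(C^*=1-2P^*-A^*\ge0\). Since \(P^*\in(0,\tfrac12)\), we have \(1-2P^*>0\), so \(A^*>0\). Moreover
\[
  A^*=\frac{\eta(1-2P^*)}{\eta+\rho+2\delta P^*}<1-2P^*,
\]
because \(\eta<\eta+\rho+2\delta P^*\). Hence \(2P^*+A^*<1\), which gives \(C^*>0\) and places the point in \(\mathcal F\). I would also note that \(P^*\neq0\), so the equilibrium really is distinct from \(E_0^{\mathrm{sym}}=(0,A^\dagger)\).

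For the ``in particular'' clause, I will argue that the threshold fails to separate two regimes globally. Below the threshold (\(\mathcal R_{\mathrm{LA}}^{\mathrm{sym}}<1\)), a positive equilibrium can persist whenever \(Q\) has a root in \((0,\tfrac12)\). So sub-threshold states need not converge to the radical-free regime, which is exactly what a global one-regime classification would require. Optionally, the statement lifts to the full simplex \(\mathcal T\) by taking the diagonal point \((P^*,P^*,A^*)\), which is an equilibrium of the full model.

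There is no real obstacle here; the step needing the most care is the admissibility check \(C^*>0\) above. Existence of parameters for which such a root actually occurs is not part of the corollary (it is hypothesised), so I would not attempt it. If desired, one can observe from \(Q(0)>0\) and \(Q(\tfrac12)>0\), given by \eqref{eq:Q_endpoints_symm}, that any interior roots come in pairs counted with multiplicity, which foreshadows the saddle--node picture.
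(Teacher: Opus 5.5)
Your proposal is correct and takes the same route as the paper, which calls the corollary immediate from Proposition~\ref{prop:symm_benchmark}\textup{(i)--(ii)}: local stability comes from \textup{(i)}, and a positive equilibrium from \textup{(ii)} rules out global attraction. Your explicit check that \(2P^*+A(P^*)<1\) is a small detail the paper leaves implicit; it places the candidate point in \(\mathcal F\), which is needed to invoke the converse direction of \textup{(ii)}.
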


\begin{proof}
Immediate from Proposition~\ref{prop:symm_benchmark}\textup{(i)--(ii)}.
\end{proof}

Corollary~\ref{cor:symm_local_not_global} shows that the frozen threshold is a
\emph{local} criterion only: it governs small-amplitude behaviour near
\(E_0^{\mathrm{sym}}\), but does not preclude a coexisting positive attractor
at finite distance. The next example shows concretely that this situation
indeed occurs.

\begin{remark}[Backward bifurcation analogue and bifurcation type]
\label{rem:backward_bif}
The subcritical coexistence of
Corollary~\ref{cor:symm_local_not_global} is the political analogue of
\emph{backward bifurcation} in compartmental epidemic models
\cite{hadeler1995,castillo2004}: a locally stable disease-free equilibrium can
coexist with a stable endemic equilibrium even when the reproduction number is
below one. The functional similarity is the presence of a reservoir that is not
currently infective, or not currently radical, but can be returned to the active
class. In epidemic models, reinfection and waning immunity can keep a pool of
individuals available for renewed transmission. In the present model, the
alienated compartment \(A\) plays the analogous dynamical role. Alienated voters
are not counted as radical support, but the bilinear mobilisation term \(ADv\)
turns this reservoir into new radical recruitment whenever organised radical
support \(v\) is already present. Thus lowering direct recruitment below the
local threshold can suppress small radical seeds while still leaving a
finite-amplitude loop
\[
  C \longrightarrow A \longrightarrow \{L,R\}
\]
that maintains the positive attractor. The bifurcation creating the pair
$(E_-^{\mathrm{sym}}, E_+^{\mathrm{sym}})$ is a \emph{saddle-node bifurcation
of positive equilibria}, occurring as parameters cross the discriminant curve
\(\operatorname{disc}_\beta(Q)=0\) in parameter space, where \(Q\) is the
quadratic \eqref{eq:Q_explicit_symm}.
\end{remark}

\begin{example}[Subcritical coexistence and bistability]
\label{ex:subcritical_coexistence}
Take
\begin{equation}\label{eq:subcritical_example_params}
  (\beta,\mu,\delta,\eta,\rho)
  =
  \left(
    2,\ 1,\ \frac35,\ \frac15,\ \frac1{20}
  \right).
\end{equation}
Then
\[
  \mathcal R_{\mathrm{LA}}^{\mathrm{sym}}
  =
  \frac{\rho\beta+\eta\delta}{\mu(\eta+\rho)}
  =
  \frac{22}{25}
  <1,
\]
so the radical-free equilibrium is locally asymptotically stable. At the same
time,
\[
  Q(P)=\frac1{100}\bigl(480P^2-76P+3\bigr),
\]
whose two roots in \((0,\tfrac12)\) are
\[
  P_-=\frac{3}{40},
  \qquad
  P_+=\frac{1}{12}.
\]
The corresponding alienation levels from \eqref{eq:AofP_symm} are
\[
  A_-=\frac12,
  \qquad
  A_+=\frac{10}{21}.
\]
Thus the frozen symmetric system has two positive equilibria
\[
  E_-^{\mathrm{sym}}=\left(\frac{3}{40},\frac12\right),
  \qquad
  E_+^{\mathrm{sym}}=\left(\frac{1}{12},\frac{10}{21}\right),
\]
in the \((P,A)\)-plane.

Using \eqref{eq:J_positive_symm}, one finds
\[
  \det J(E_-^{\mathrm{sym}})=-\frac{3}{1000}<0,
\]
so \(E_-^{\mathrm{sym}}\) is a saddle, whereas
\[
  \operatorname{tr}J(E_+^{\mathrm{sym}})=-\frac{41}{60}<0,
  \qquad
  \det J(E_+^{\mathrm{sym}})=\frac{1}{300}>0,
\]
so \(E_+^{\mathrm{sym}}\) is locally asymptotically stable. By
Proposition~\ref{prop:all_positive_symm}, every positive equilibrium of the
full frozen symmetric model lies on the diagonal \(L=R\), and by
Proposition~\ref{prop:lift_symm_stability} the planar stability types transfer
directly to the full three-dimensional system. Therefore the frozen symmetric
benchmark exhibits genuine bistability in the full model: the radical-free
equilibrium and the upper positive equilibrium are both locally asymptotically
stable, and their basins are separated by the saddle branch.
Figure~\ref{fig:bistability} illustrates the corresponding basin structure in
the \((P,A)\)-plane on the invariant diagonal.
\end{example}

\begin{figure}[t]
\centering
\includegraphics[width=0.65\textwidth]{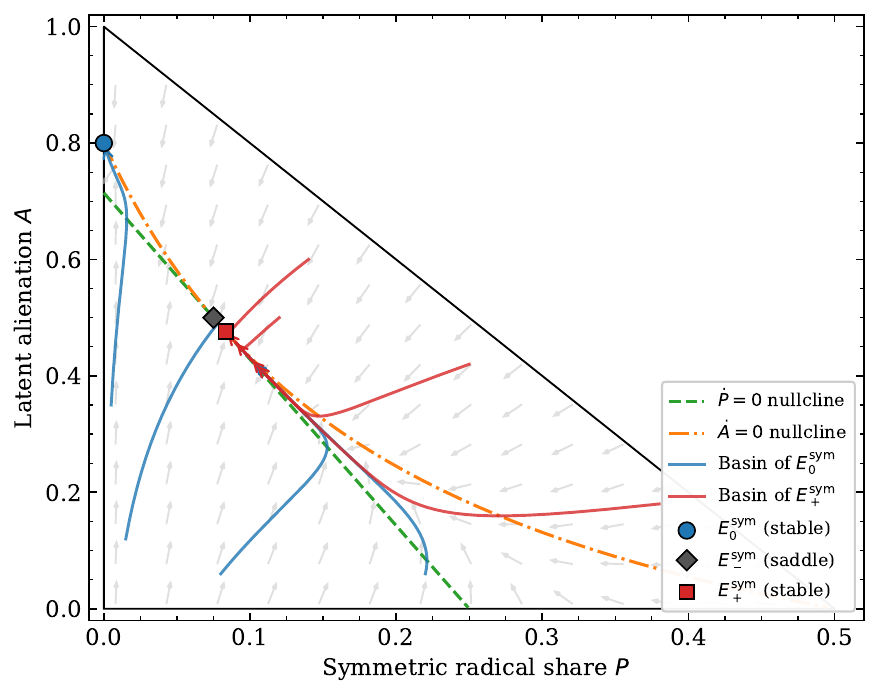}
\caption{Phase portrait of \eqref{eq:model_symm} for
Example~\ref{ex:subcritical_coexistence}
(\(\mathcal R_{\mathrm{LA}}^{\mathrm{sym}}=22/25<1\)).
Blue/red: trajectories from basins of \(E_0^{\mathrm{sym}}\) and
\(E_+^{\mathrm{sym}}\); grey diamond: saddle \(E_-^{\mathrm{sym}}\);
dashed/dash-dotted: nullclines.}
\label{fig:bistability}
\end{figure}

Example~\ref{ex:subcritical_coexistence} sharpens the message of
Corollary~\ref{cor:symm_local_not_global}. The issue is not merely that
positive equilibria may exist below the local threshold, but that a locally
stable radical-free regime may coexist with a distinct stable radicalised
regime. In political terms, a centrist--alienated background that is locally
resilient to small radical seeds need not exclude a self-sustaining radicalised
state elsewhere in the simplex.
Figure~\ref{fig:bistability_timeseries} complements the phase portrait by
showing time-domain trajectories from the two basins of attraction and from
initial data near the separatrix.

\begin{figure}[t]
\centering
\includegraphics[width=0.92\textwidth]{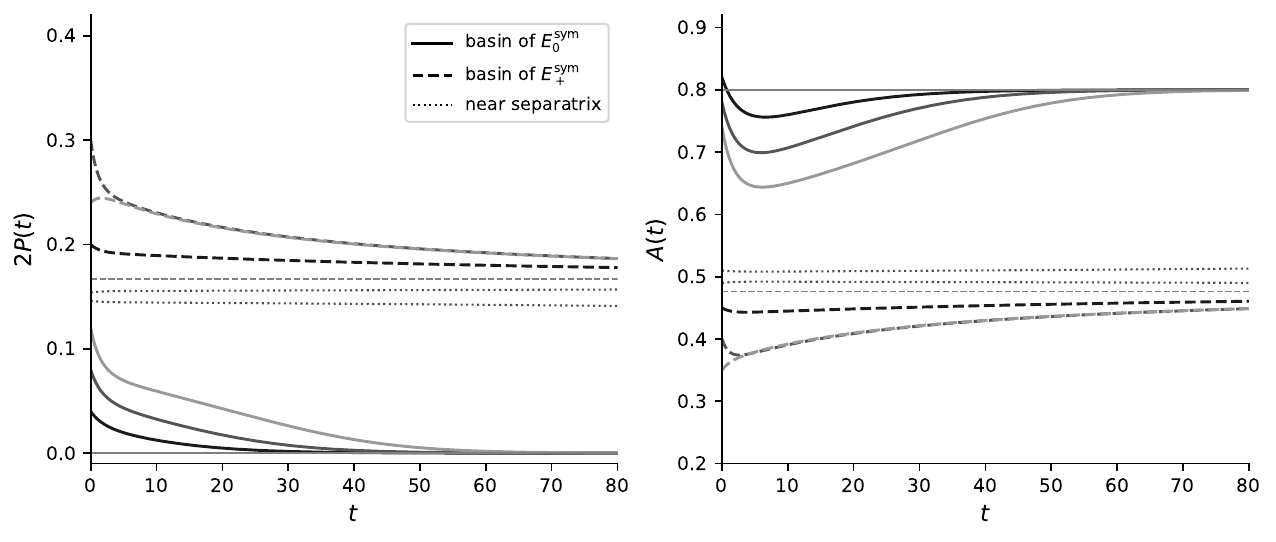}
\caption{Time-domain trajectories for Example~\ref{ex:subcritical_coexistence}.
Left: \(2P(t)\). Right: \(A(t)\). Solid/dashed/dotted: basins of
\(E_0^{\mathrm{sym}}\), \(E_+^{\mathrm{sym}}\), and near the separatrix.
Horizontal lines: equilibrium levels.}
\label{fig:bistability_timeseries}
\end{figure}

The bistability window of Example~\ref{ex:subcritical_coexistence} can be
located explicitly by treating \(\beta\) as a bifurcation parameter while
holding \((\mu,\delta,\eta,\rho)\) fixed at the values in
\eqref{eq:subcritical_example_params}. The local threshold of the
radical-free branch occurs at
\[
  \beta_c=\frac{\mu(\eta+\rho)-\eta\delta}{\rho}=\frac{13}{5}=2.6,
\]
whereas the discriminant of the quadratic \(Q(P)\), as a function of
\(\beta\), is
\[
  \operatorname{disc}_\beta(Q)
  =\frac{1}{2500}\bigl(4225\beta^2-11040\beta+5184\bigr).
\]
The admissible positive branches in \((0,1/2)\) are created at the saddle-node
value
\[
  \beta_{\mathrm{SN}}
  =
  \frac{1104+24\sqrt{595}}{845}
  \approx 1.999317.
\]
Hence the frozen symmetric system has a bistability window
\[
  \beta_{\mathrm{SN}}<\beta<\beta_c,
\]
in which the radical-free equilibrium and the upper positive equilibrium are
both locally stable, separated by a saddle branch.
Figure~\ref{fig:bifurcation_beta} displays the full branch structure.

\begin{figure}[t]
\centering
\includegraphics[width=0.72\textwidth]{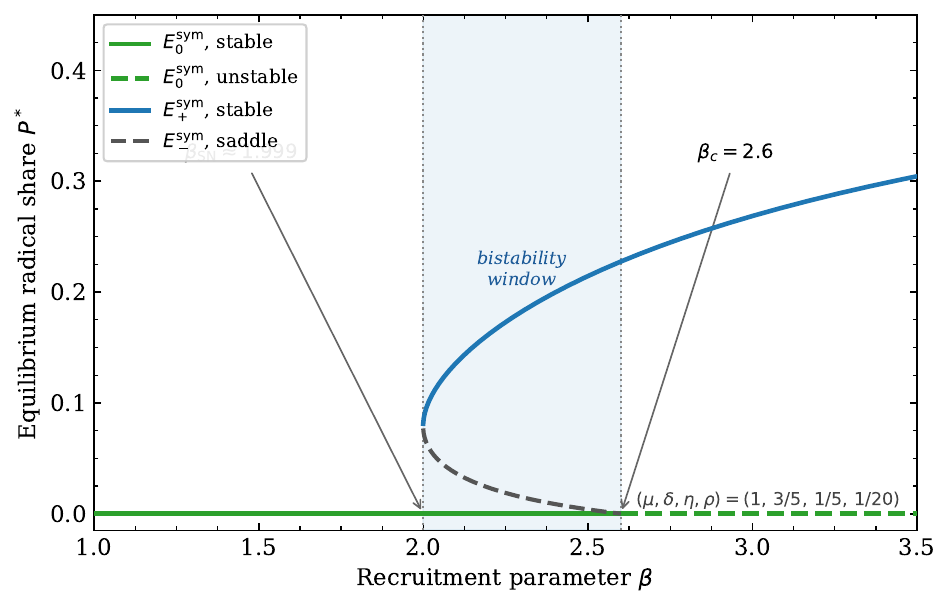}
\caption{Bifurcation diagram of \eqref{eq:model_symm} vs.\ \(\beta\),
fixed \((\mu,\delta,\eta,\rho)=(1,3/5,1/5,1/20)\). Solid: stable;
dashed: unstable/saddle. Bistability window:
\(\beta_{\mathrm{SN}}\approx 1.999317 < \beta < \beta_c=13/5\).}
\label{fig:bifurcation_beta}
\end{figure}

\begin{remark}[Persistence under weak asymmetry]
\label{rem:asymm_persistence}
At the parameter values of Example~\ref{ex:subcritical_coexistence}, the
radical-free equilibrium, the positive stable equilibrium, and the separating
saddle are all hyperbolic in the full symmetric frozen model by
Theorem~\ref{thm:frozen_threshold} and
Proposition~\ref{prop:lift_symm_stability}. Therefore, by the implicit function
theorem and continuity of eigenvalues, these equilibria persist with the same
stability types under sufficiently small perturbations of the symmetry
assumptions \eqref{eq:symm_assumptions}. In this sense, the coexistence pattern
is not an artifact of exact symmetry.
\end{remark}

\begin{figure}[t]
\centering
\includegraphics[width=0.62\textwidth]{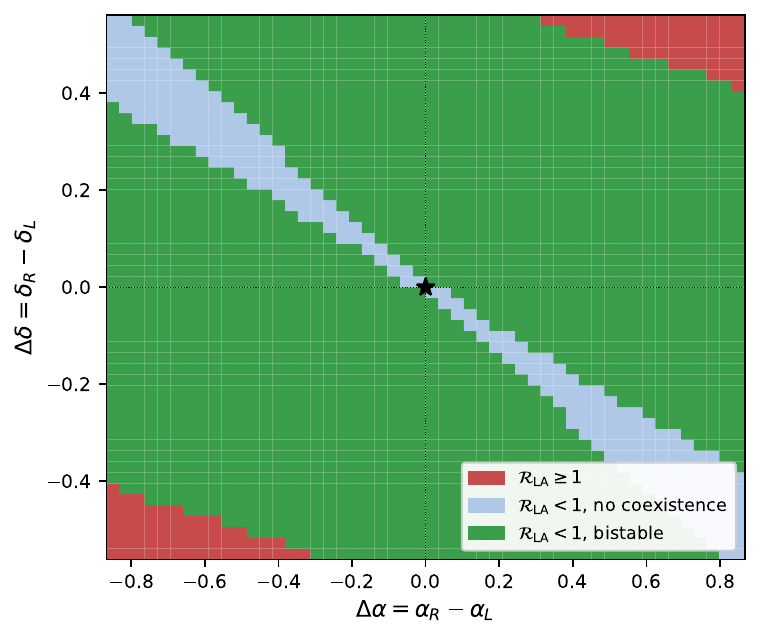}
\caption{Bistability map in the \((\Delta\alpha,\Delta\delta)\)-plane
relative to the symmetric benchmark
\((\alpha=1,\delta=0.6,\mu=1,\eta=0.2,\rho=0.05)\).
Green: \(\mathcal R_{\mathrm{LA}}<1\), bistable.
Blue: \(\mathcal R_{\mathrm{LA}}<1\), no coexistence.
Red: \(\mathcal R_{\mathrm{LA}}\ge1\). Star: symmetric baseline.}
\label{fig:asymm_bistability_map}
\end{figure}

Figure~\ref{fig:asymm_bistability_map} maps the bistability region
across a $(\Delta\alpha,\Delta\delta)$-neighbourhood of the symmetric
baseline. By Remark~\ref{rem:asymm_persistence}, bistability persists analytically
on an open neighbourhood of the symmetric baseline. Within the plotted
window, the numerically identified bistable region occupies most of the
subcritical area, confirming that below-threshold coexistence is not
confined to the exactly symmetric case. The
no-coexistence band visible along the anti-diagonal
($\Delta\delta \approx -0.6\,\Delta\alpha$) corresponds to parameter
combinations in which higher recruitment asymmetry in one bloc is offset
by lower mobilisation asymmetry, removing positive equilibria while
keeping $\mathcal R_{\mathrm{LA}}<1$.

The bistability map in Figure~\ref{fig:asymm_bistability_map} was
computed on a $50\times50$ grid over
$(\Delta\alpha,\Delta\delta)\in[-0.85,0.85]\times[-0.55,0.55]$, with
$(\alpha_L,\alpha_R)=(1+\Delta\alpha,1-\Delta\alpha)$ and
$(\delta_L,\delta_R)=(0.6+\Delta\delta,0.6-\Delta\delta)$. At each grid point,
the Perron--Frobenius threshold $\mathcal R_{\mathrm{LA}}$ was evaluated
analytically. If $\mathcal R_{\mathrm{LA}}<1$, the compatibility condition
\eqref{eq:pf_compatibility} was scanned for sign changes along the
Perron--Frobenius surface \eqref{eq:pf_surface}; detected roots were refined by
\texttt{scipy.optimize.brentq} to absolute tolerance $10^{-11}$, and the
Jacobian eigenvalues at the reconstructed equilibria were then used to
identify coexistence of a stable radical-free equilibrium and a stable positive
equilibrium. A grid point was classified as bistable only when this coexistence
was detected.

The $50\times50$ resolution is used for visual mapping rather than for the
analytic persistence claim: the open neighbourhood of bistability around the
symmetric baseline follows from Remark~\ref{rem:asymm_persistence}, while the
grid displays how far that region extends in the chosen two-parameter slice.
Thus individual boundary pixels should be read qualitatively, not as a
high-precision bifurcation boundary.

The basin colouring in Figure~\ref{fig:bistability} was computed separately in
the invariant triangle \(\mathcal F\). Initial conditions on a uniform mesh were
integrated forward with \texttt{scipy.integrate.solve\_ivp}; each point was
assigned to the basin of \(E_0^{\mathrm{sym}}\) or \(E_+^{\mathrm{sym}}\) by its
terminal distance to the two attracting equilibria. The separatrix shown in the
phase portrait was obtained independently from the saddle
\(E_-^{\mathrm{sym}}\): starting at \(E_-^{\mathrm{sym}}\pm h e_s\), where
\(e_s\) is the stable eigenvector of the planar saddle and \(h=10^{-6}\), we
integrated the reversed planar vector field to trace the two branches of the
stable manifold. The resulting curve coincides with the boundary found by
forward basin classification.

This is precisely why the adiabatic theorem in the next section must be local
in state space. Since subcriticality of the frozen threshold does not eliminate
positive attractors, one cannot expect a global adiabatic statement in which
all trajectories follow the radical-free branch under slow parameter drift.
Instead, the result will control trajectories that start near that branch and
remain in a uniformly subcritical parameter region.

\section{Slow Drift and Local Adiabatic Tracking}\label{sec:adiabatic}

We now return to the nonautonomous model and ask whether the radical-free
centrist--alienated branch remains dynamically relevant when the effective
environment drifts slowly in time. Although this setting has an evident
slow-drift flavour, we do not invoke classical Fenichel theory directly.
Indeed, the slow evolution is prescribed at the level of the
effective parameter regime \(\theta(t)\) (and, in Section~\ref{sec:delay}, through the
externally given target regime \(\theta_\star(t)\)), rather than through a
standard autonomous fast--slow splitting of the state variables. For that
reason we work directly with a nonautonomous Lyapunov argument for the moving
branch \(\theta\mapsto x^\dagger(\theta)\).

Section~\ref{sec:symm} shows why the resulting statement must be formulated
locally in state space rather than globally over the whole simplex. In
particular, Example~\ref{ex:subcritical_coexistence} shows that even below the
local threshold there may coexist a radical-free attractor and a distinct
positive radicalised attractor. One therefore cannot expect an all-initial-data
adiabatic theorem for the radical-free branch. The natural statement is instead
local: if the effective parameters remain uniformly on the subcritical side of
the frozen threshold and vary slowly enough, then trajectories that start near
the moving radical-free branch remain close to it on the time interval under
consideration.

Let
\[
  x=(L,R,A)\in\mathcal T,
  \qquad
  \dot x = F(x,\theta),
\]
denote the vector field \eqref{eq:model} with frozen parameter value
\(\theta\in\mathcal P\). The radical-free branch from
Theorem~\ref{thm:frozen_threshold} is
\begin{equation}\label{eq:xdag_adiabatic}
  x^\dagger(\theta)
  :=
  \bigl(0,0,A^\dagger(\theta)\bigr),
  \qquad
  A^\dagger(\theta)=\frac{\eta(\theta)}{\eta(\theta)+\rho(\theta)},
  \qquad
  C^\dagger(\theta)=\frac{\rho(\theta)}{\eta(\theta)+\rho(\theta)}.
\end{equation}
For a fixed margin \(\nu\in(0,1)\), define the compact uniformly subcritical set
\begin{equation}\label{eq:Pminus_adiabatic}
  \mathcal P^-_\nu
  :=
  \{\theta\in\mathcal P:\ \mathcal R_{\mathrm{LA}}(\theta)\le 1-\nu\}.
\end{equation}
Thus every \(\theta\in\mathcal P^-_\nu\) lies a definite positive distance from
the frozen threshold surface
\[
  \Gamma
  :=
  \{\theta\in\mathcal P:\ \mathcal R_{\mathrm{LA}}(\theta)=1\}.
\]

\begin{proposition}[Uniform hyperbolicity and Lyapunov family]
\label{prop:uniform_hyperbolicity}
Fix \(\nu\in(0,1)\) and assume that \(\mathcal P^-_\nu\neq\varnothing\).
For \(\theta\in\mathcal P^-_\nu\), let
\[
  \mathcal J^\dagger(\theta):=
  D_xF\bigl(x^\dagger(\theta),\theta\bigr).
\]
Then there exists \(\omega_\nu>0\) such that
\begin{equation}\label{eq:uniform_hurwitz_adiabatic}
  s\!\bigl(\mathcal J^\dagger(\theta)\bigr)\le -\omega_\nu
  \qquad
  \text{for all }\theta\in\mathcal P^-_\nu.
\end{equation}
Moreover, there exist an open neighbourhood \(U_\nu\subset\mathbb R^{10}\) of
\(\mathcal P^-_\nu\) and a \(C^1\) map
\[
  P:U_\nu\to \mathbb R^{3\times3}
\]
such that \(P(\theta)\) is symmetric positive definite for every
\(\theta\in U_\nu\), and
\begin{equation}\label{eq:lyapunov_eq_adiabatic}
  \mathcal J^\dagger(\theta)^\top P(\theta)
  +
  P(\theta)\mathcal J^\dagger(\theta)
  =
  -I_3,
  \qquad
  \theta\in\mathcal P^-_\nu.
\end{equation}
Moreover, there exist constants \(m_\nu,M_\nu,L_\nu>0\) with
\begin{equation}\label{eq:P_bounds_adiabatic}
  m_\nu I_3\le P(\theta)\le M_\nu I_3,
  \qquad
  \|DP(\theta)\|\le L_\nu
  \qquad
  \text{for all }\theta\in\mathcal P^-_\nu.
\end{equation}
\end{proposition}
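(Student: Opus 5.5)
The plan is to combine the block-triangular structure from Theorem~\ref{thm:frozen_threshold} with compactness and the standard integral formula for Lyapunov solutions. At \(x^\dagger(\theta)\) the spectrum of \(\mathcal J^\dagger(\theta)\) is the union of the spectrum of \(J_{\mathrm{rad}}(\theta)=C^\dagger K+A^\dagger D-M\) and \(\{-(\eta+\rho)\}\). Hence
\[
  s(\mathcal J^\dagger(\theta))=\max\{s(J_{\mathrm{rad}}(\theta)),\,-(\eta(\theta)+\rho(\theta))\}.
\]
The map \(\theta\mapsto\mathcal J^\dagger(\theta)\) is continuous (indeed \(C^\infty\)) on a neighbourhood of \(\mathcal P\), since \(\eta+\rho>0\) there and the entries are rational in \(\theta\). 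The spectral abscissa is continuous in the matrix entries, so \(\theta\mapsto s(\mathcal J^\dagger(\theta))\) is continuous. By the Perron--Frobenius/\(M\)-matrix criterion used in Theorem~\ref{thm:frozen_threshold}, \(\mathcal R_{\mathrm{LA}}(\theta)\le1-\nu<1\) gives \(s(J_{\mathrm{rad}}(\theta))<0\), and \(-(\eta+\rho)<0\) always. So \(s(\mathcal J^\dagger)<0\) pointwise on \(\mathcal P^-_\nu\).

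The set \(\mathcal P^-_\nu\) is compact: it is closed in the compact set \(\mathcal P\), because \(\mathcal R_{\mathrm{LA}}\) is continuous (the Perron root is a continuous function of a positive matrix). A continuous negative function attains a negative maximum on it, which gives \(\omega_\nu>0\) and \eqref{eq:uniform_hurwitz_adiabatic}.

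For the Lyapunov family I would first extend \(\mathcal J^\dagger\) smoothly to an open set \(V\supset\mathcal P\) using the same formulas. The set \(U_\nu:=\{\theta\in V: s(\mathcal J^\dagger(\theta))<-\omega_\nu/2\}\) is then open by continuity and contains \(\mathcal P^-_\nu\). On \(U_\nu\) I would define
\[
  P(\theta)=\int_0^\infty e^{s\mathcal J^\dagger(\theta)^\top}e^{s\mathcal J^\dagger(\theta)}\,ds,
\]
or equivalently the unique solution of the linear system \((I\otimes \mathcal J^{\dagger\top}+\mathcal J^{\dagger\top}\otimes I)\operatorname{vec}P=-\operatorname{vec}I_3\). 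The Kronecker operator has eigenvalues \(\lambda_i+\lambda_j\), all with real part below \(-\omega_\nu\), so it is invertible on \(U_\nu\). Its inverse is smooth by Cramer's rule, which makes \(P\) a \(C^1\) (indeed \(C^\infty\)) map satisfying \eqref{eq:lyapunov_eq_adiabatic}. The matrix \(P(\theta)\) is symmetric by uniqueness, since the transpose also solves the equation, and it is positive definite from the integral representation, since the integrand is \(\succeq e^{s\mathcal J^\top}e^{s\mathcal J}\succ0\).

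The bounds \eqref{eq:P_bounds_adiabatic} then follow by compactness. The maps \(\theta\mapsto\lambda_{\min}(P(\theta))\), \(\lambda_{\max}(P(\theta))\) and \(\|DP(\theta)\|\) are continuous on \(U_\nu\supset\mathcal P^-_\nu\). On the compact set \(\mathcal P^-_\nu\) they therefore attain their extrema, and \(\lambda_{\min}>0\) pointwise gives \(m_\nu>0\). The constant \(L_\nu\) can be taken as the maximum of \(\|DP\|\), made positive by adding \(1\) if necessary. For an explicit form one can differentiate the Lyapunov equation, which shows that \(\partial_k P\) solves \(\mathcal J^\top X+X\mathcal J=-(\partial_k\mathcal J^\top P+P\partial_k\mathcal J)\). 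This yields \(\|DP\|\lesssim M_\nu^2\sup\|D\mathcal J^\dagger\|\), but the compactness argument alone suffices.

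The main obstacle is the regularity bookkeeping: \(\mathcal P\) is only a compact convex subset of \(\mathbb R^{10}\), so \(C^1\) must be understood on an open neighbourhood. I would handle this by noting that all entries of \(\mathcal J^\dagger\) are rational in \(\theta\) with denominator \(\eta+\rho\), which is bounded away from zero near \(\mathcal P\). The other subtle point is the continuity of the spectral abscissa used for openness; to sidestep it, I would use instead that invertibility of the Kronecker operator and positivity of \(P\) are open conditions. For this it suffices to shrink \(U_\nu\) to the set where the Kronecker operator is invertible and the resulting \(P\) is positive definite. Both conditions are open and hold on \(\mathcal P^-_\nu\).
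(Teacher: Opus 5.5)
Your proposal is correct and follows essentially the same route as the paper. Both arguments get a uniformly negative spectral abscissa from the block-triangular Jacobian, the \(M\)-matrix criterion and compactness of \(\mathcal P^-_\nu\), build the Lyapunov solution from its integral representation, and obtain the bounds by compactness. Your Kronecker/Cramer's-rule argument for smoothness on an explicitly constructed open \(U_\nu\) is a slightly cleaner substitute for the paper's differentiation under the integral sign. Reading \(s(J_{\mathrm{rad}})<0\) directly from the \(M\)-matrix criterion also avoids the paper's implicit step from local asymptotic stability to a negative spectral abscissa.
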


\begin{proof}
See \ref{app:prop_uniform_hyperbolicity}.
\end{proof}

The next theorem uses the Lyapunov family from
Proposition~\ref{prop:uniform_hyperbolicity} to show that trajectories
near the radical-free branch remain close to it under slow subcritical drift.

\begin{theorem}[Local adiabatic tracking away from the threshold surface]
\label{thm:adiabatic_tracking}
Fix \(\nu\in(0,1)\) and assume that \(\mathcal P^-_\nu\neq\varnothing\).
Then there exist constants
\[
  r_\nu>0, \qquad \varepsilon_\nu>0, \qquad \lambda_\nu>0, \qquad c_\nu>0
\]
(a neighbourhood radius, a slowness threshold, a decay rate, and a 
tracking constant, respectively) with the following property.

Let \(T>0\), and let
\[
  \theta:[0,T]\to\mathcal P^-_\nu
\]
be an absolutely continuous effective parameter path satisfying
\begin{equation}\label{eq:theta_slow_adiabatic}
  \operatorname*{ess\,sup}_{t\in[0,T]}\|\dot\theta(t)\|
  \le \varepsilon
  \qquad
  \text{for some }0<\varepsilon\le\varepsilon_\nu.
\end{equation}
Let \(x(\cdot)\) be the corresponding solution of
\begin{equation}\label{eq:nonaut_adiabatic}
  \dot x(t)=F(x(t),\theta(t)),
  \qquad
  x(0)=x_0\in\mathcal T.
\end{equation}
If
\begin{equation}\label{eq:initial_small_adiabatic}
  \|x_0-x^\dagger(\theta(0))\|\le \frac{r_\nu}{2c_\nu},
  \qquad
  \varepsilon\le \frac{r_\nu}{2c_\nu},
\end{equation}
then
\begin{equation}\label{eq:adiabatic_estimate}
  \|x(t)-x^\dagger(\theta(t))\|
  \le
  c_\nu e^{-\lambda_\nu t}\,
  \|x_0-x^\dagger(\theta(0))\|
  +
  c_\nu \varepsilon
  \qquad
  \text{for all }t\in[0,T].
\end{equation}
In particular,
\begin{equation}\label{eq:adiabatic_neighbourhood}
  \|x(t)-x^\dagger(\theta(t))\|\le r_\nu
  \qquad
  \text{for all }t\in[0,T].
\end{equation}
\end{theorem}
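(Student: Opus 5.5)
We work with the error coordinate \(y(t):=x(t)-x^\dagger(\theta(t))\) and the time-dependent quadratic Lyapunov function
\[
  V(t):=y(t)^\top P(\theta(t))\,y(t),
\]
where \(P\) is the family from Proposition~\ref{prop:uniform_hyperbolicity}. Since \(x^\dagger\) is smooth in \(\theta\) (it depends only on \(\eta,\rho\), with \(\eta+\rho\) bounded away from zero on the compact set \(\mathcal P\)), there is a constant \(K_\dagger:=\sup_{\mathcal P}\|Dx^\dagger\|\). Thus \(y\) is absolutely continuous, with
\[
  \dot y = F(x^\dagger(\theta)+y,\theta)-Dx^\dagger(\theta)\dot\theta
  =\mathcal J^\dagger(\theta)y+G(y,\theta)-Dx^\dagger(\theta)\dot\theta .
\]
The remainder is \(G(y,\theta):=F(x^\dagger+y,\theta)-F(x^\dagger,\theta)-\mathcal J^\dagger y\), and we use \(F(x^\dagger(\theta),\theta)=0\). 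Since \(F\) is polynomial (quadratic) in \(x\), with coefficients depending continuously on \(\theta\) over compact \(\mathcal P\), we obtain \(\|G(y,\theta)\|\le K_2\|y\|^2\) uniformly on \(\mathcal P\). We also have \(\|\dot\theta\|\le\varepsilon\) a.e.

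\textbf{Step 1: differential inequality.} For a.e.\ \(t\), using \eqref{eq:lyapunov_eq_adiabatic} and \eqref{eq:P_bounds_adiabatic},
\[
  \dot V = -\|y\|^2 + 2y^\top P G + 2y^\top P\,(-Dx^\dagger\dot\theta) + y^\top\bigl(DP(\theta)[\dot\theta]\bigr)y .
\]
We bound each term:
\[
  \dot V\le -\|y\|^2 + 2M_\nu K_2\|y\|^3 + 2M_\nu K_\dagger\varepsilon\|y\| + L_\nu\varepsilon\|y\|^2 .
\]
Choose \(r_\nu\) so that \(2M_\nu K_2 r_\nu\le 1/4\), and \(\varepsilon_\nu\) so that \(L_\nu\varepsilon_\nu\le 1/4\). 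Then, as long as \(\|y\|\le r_\nu\),
\[
  \dot V\le -\tfrac12\|y\|^2 + 2M_\nu K_\dagger\varepsilon\|y\|
  \le -\tfrac14\|y\|^2 + 4M_\nu^2K_\dagger^2\varepsilon^2
\]
by Young's inequality. With \(m_\nu\|y\|^2\le V\le M_\nu\|y\|^2\), this becomes the linear inequality
\[
  \dot V\le -2\lambda_\nu V + b\,\varepsilon^2,\qquad \lambda_\nu:=\frac{1}{8M_\nu},\quad b:=4M_\nu^2K_\dagger^2 .
\]

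\textbf{Step 2: Grönwall and conversion back.} Grönwall's inequality gives \(V(t)\le e^{-2\lambda_\nu t}V(0)+\frac{b}{2\lambda_\nu}\varepsilon^2\). Taking square roots, using \(\sqrt{a+b}\le\sqrt a+\sqrt b\), and applying the bounds on \(P\) yields \eqref{eq:adiabatic_estimate} with
\[
  c_\nu:=\max\Bigl\{\sqrt{M_\nu/m_\nu},\ \sqrt{b/(2\lambda_\nu m_\nu)}\Bigr\}\ge 1 .
\]

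\textbf{Step 3: bootstrap.} The main obstacle is that Step 1 is valid only while \(\|y\|\le r_\nu\). Let \(t^*:=\sup\{t\le T:\|y(s)\|\le r_\nu\text{ on }[0,t]\}\). Since \(c_\nu\ge1\), the hypothesis \eqref{eq:initial_small_adiabatic} gives \(\|y(0)\|\le r_\nu/2<r_\nu\), so \(t^*>0\) by continuity. On \([0,t^*]\) the estimate holds, and \eqref{eq:initial_small_adiabatic} then gives \(\|y(t)\|\le r_\nu/2+r_\nu/2=r_\nu\).

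For a strict improvement, we run the argument with the threshold \(r_\nu\) in Step 1 replaced by \(2r_\nu\). That is, we define the constants so that the differential inequality holds for \(\|y\|\le 2r_\nu\), while keeping the conclusion bound \(r_\nu\). Then \(\|y\|\le r_\nu<2r_\nu\) on \([0,t^*]\), and continuity forces \(t^*=T\). This yields \eqref{eq:adiabatic_neighbourhood}.

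Global existence and the fact that the solution stays in \(\mathcal T\) come from Proposition~\ref{prop:forward_invariant}. The differentiation of \(V\) along an absolutely continuous \(\theta\) is justified a.e. by the chain rule, using the fact that \(P\) is \(C^1\) on the open neighbourhood \(U_\nu\).
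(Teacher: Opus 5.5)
Your proposal is correct and follows the paper's proof essentially step for step: the same deviation equation for $y=x-x^\dagger(\theta)$, the same parameter-dependent Lyapunov function $V=y^\top P(\theta)y$, the same Young/Gr\"onwall estimate with identical constants $\lambda_\nu=1/(8M_\nu)$ and $c_\nu$, and a continuity bootstrap. The differences are minor. You observe that the quadratic remainder bound holds globally because $F$ is quadratic in $x$. You also close the bootstrap by requiring the differential inequality on the larger ball of radius $2r_\nu$, whereas the paper keeps radius $r_\nu$ and gets the needed strict inequality from $e^{-\lambda_\nu t_*}<1$ at $t_*>0$.
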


\begin{proof}
Set \(y(t):=x(t)-x^\dagger(\theta(t))\). Since
\(F(x^\dagger(\theta),\theta)=0\), the deviation satisfies
\[
  \dot y = \mathcal J^\dagger(\theta(t))\,y
           + \mathcal N(y(t),\theta(t))
           - \dot x^\dagger(\theta(t)),
\]
where \(\mathcal N\) is the quadratic remainder and
\(\|\dot x^\dagger\|\le B_\nu\varepsilon\). One then forms the
parameter-dependent Lyapunov function
\(V(t)=y(t)^\top P(\theta(t))y(t)\) using the family from
Proposition~\ref{prop:uniform_hyperbolicity}, applies Young's inequality and
Grönwall's lemma to obtain \eqref{eq:adiabatic_estimate} conditionally on
\(\|y(s)\|\le r_\nu\), and closes the argument by a bootstrap on the maximal
interval \([0,t_*]\) on which this bound holds: the condition
\eqref{eq:initial_small_adiabatic} forces \(t_*=T\).
Full details are given in \ref{app:thm_adiabatic}.
\end{proof}

\begin{remark}[Scope of local tracking and initially radicalised states]
\label{rem:adiabatic_scope_far_initial_data}
Theorem~\ref{thm:adiabatic_tracking} is deliberately not a global tracking
result. Its hypothesis \eqref{eq:initial_small_adiabatic} requires the initial
condition to lie in a neighbourhood of the moving radical-free branch
\(x^\dagger(\theta(0))\). If a society already has substantial radical support,
so that the initial state is far from \(x^\dagger(\theta(0))\), the theorem
makes no prediction about its subsequent evolution. In the bistable regime
identified in Section~\ref{sec:symm}, such an initial condition is governed by
basin membership relative to the separatrix between the radical-free and
positive attractors. It may converge to the radicalised equilibrium, or remain
near it under slow drift, even while the effective parameters stay locally
subcritical. Thus adiabatic tracking describes only the local basin of the
no-radical branch; initially radicalised states require an analysis of frozen
basin geometry and post-crossing state-space delay. This is the dynamical
content of the basin-boundary interpretation discussed in
Section~\ref{sec:discussion}.
\end{remark}

The relaxation law \eqref{eq:theta_relax} fits this framework whenever
\(\kappa_\theta\,\operatorname{diam}(\mathcal P)\le\varepsilon_\nu\).
The next section addresses threshold passage itself.

\section{Delayed Threshold Crossing under Implementation Lag}\label{sec:delay}

Theorem~\ref{thm:adiabatic_tracking} applies as long as the \emph{effective}
parameter path remains in a uniformly subcritical region of parameter space.
Once the effective regime approaches the frozen threshold surface
\[
  \Gamma=\{\theta\in\mathcal P:\ \mathcal R_{\mathrm{LA}}(\theta)=1\},
\]
the natural question is no longer the full state evolution on the simplex, but
the relative timing of threshold passage in the target and effective regimes.
This is precisely where implementation lag enters the model. In political
terms, target parameters describe the intended institutional environment,
whereas effective parameters describe the environment that actually governs
voter flows after administrative, legislative, and behavioural delay.

Given a target regime \(\theta_\star(t)\) and an effective regime \(\theta(t)\)
satisfying the relaxation law \eqref{eq:theta_relax}, define the corresponding
target and effective threshold trajectories by
\begin{equation}\label{eq:r_paths_delay}
  r_\star(t):=\mathcal R_{\mathrm{LA}}(\theta_\star(t)),
  \qquad
  r_{\mathrm{eff}}(t):=\mathcal R_{\mathrm{LA}}(\theta(t)).
\end{equation}
Thus \(r_\star(t)\) records the threshold status under instantaneous
implementation, and \(r_{\mathrm{eff}}(t)\) the status of the effective regime.
The threshold functional is regular on \(\mathcal P\):

\begin{lemma}[Regularity of the threshold functional]
\label{lem:RLA_regular}
The map
\[
  \mathcal R_{\mathrm{LA}}:\mathcal P\to(0,\infty)
\]
defined in \eqref{eq:RLA} is \(C^1\) on \(\mathcal P\). In particular, there
exists a finite constant \(L_{\mathcal R}>0\) such that
\begin{equation}\label{eq:R_lipschitz_delay}
  |\mathcal R_{\mathrm{LA}}(\theta_1)-\mathcal R_{\mathrm{LA}}(\theta_2)|
  \le
  L_{\mathcal R}\|\theta_1-\theta_2\|
  \qquad
  \text{for all }\theta_1,\theta_2\in\mathcal P.
\end{equation}
\end{lemma}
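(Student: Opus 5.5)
The plan is to write \(\mathcal R_{\mathrm{LA}}\) explicitly as a composition of smooth maps and then use compactness of \(\mathcal P\) for the Lipschitz bound. For \(\theta\in\mathcal P\) set
\[
  G(\theta):=M(\theta)^{-1}\bigl[C^\dagger(\theta)K(\theta)+A^\dagger(\theta)D(\theta)\bigr],
\]
with \(A^\dagger=\eta/(\eta+\rho)\) and \(C^\dagger=\rho/(\eta+\rho)\). Since \(\eta,\rho>0\) and \(\mu_L,\mu_R>0\) on \(\mathcal P\), every entry of \(G\) is a rational function of the ten parameters whose denominator does not vanish. The same holds on an open neighbourhood of \(\mathcal P\), because strict positivity is an open condition and \(\mathcal P\) is compact. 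Hence \(G\) is \(C^\infty\) on a neighbourhood of \(\mathcal P\). The entries of \(G\) are also strictly positive there, because \(K\) has strictly positive entries and \(C^\dagger>0\).

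For a \(2\times2\) matrix the Perron root has a closed form, which avoids any abstract eigenvalue-perturbation theory:
\[
  \lambda_{\mathrm{PF}}(G)=\tfrac12\Bigl(\operatorname{tr}G+\sqrt{(G_{11}-G_{22})^2+4G_{12}G_{21}}\Bigr).
\]
The step that needs care is smoothness of the square root. This is the only potential obstacle, and it is resolved by positivity: \(G_{12}G_{21}>0\), so the discriminant is strictly positive on a neighbourhood of \(\mathcal P\). There \(s\mapsto\sqrt s\) is smooth, and \(\mathcal R_{\mathrm{LA}}\) is therefore \(C^\infty\), in particular \(C^1\). The same bound gives \(\lambda_{\mathrm{PF}}(G)>\tfrac12\operatorname{tr}G>0\), so \(\mathcal R_{\mathrm{LA}}\) indeed maps into \((0,\infty)\). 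As an alternative route, the Perron root is an algebraically simple eigenvalue of an irreducible nonnegative matrix, so the implicit function theorem applied to \(\det(\lambda I-G(\theta))=0\) gives the same regularity. I would state the explicit formula and mention this as a remark.

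For the Lipschitz estimate \eqref{eq:R_lipschitz_delay}, I take \(L_{\mathcal R}:=\max_{\theta\in\mathcal P}\|D\mathcal R_{\mathrm{LA}}(\theta)\|\). This maximum is finite because \(D\mathcal R_{\mathrm{LA}}\) is continuous on the compact set \(\mathcal P\). If the maximum happens to be \(0\), I replace it by any positive constant. Since \(\mathcal P\) is convex, the segment \(\theta_2+s(\theta_1-\theta_2)\), \(s\in[0,1]\), lies in \(\mathcal P\). The mean value inequality along this segment then gives
\[
  |\mathcal R_{\mathrm{LA}}(\theta_1)-\mathcal R_{\mathrm{LA}}(\theta_2)|\le L_{\mathcal R}\|\theta_1-\theta_2\|.
\]
Convexity is used here, and it is exactly why \(\mathcal P\) was assumed convex.
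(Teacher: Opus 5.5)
Your proof is correct, but you reach the $C^1$ claim by a different route from the paper. The paper notes that $B^\dagger(\theta)=M(\theta)^{-1}[C^\dagger K+A^\dagger D]$ is a $C^1$ family of strictly positive matrices, so its Perron root is algebraically simple. It then cites perturbation theory for simple eigenvalues (Kato) to get $C^1$ dependence, and deduces the Lipschitz bound in one line from compactness of $\mathcal P$.

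You instead use the explicit $2\times 2$ formula $\lambda_{\mathrm{PF}}=\tfrac12(\operatorname{tr}G+\sqrt{(G_{11}-G_{22})^2+4G_{12}G_{21}})$. You observe that the discriminant is bounded below by $4G_{12}G_{21}>0$, which is the same simplicity fact in concrete form: the two eigenvalues are real and distinct.

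Your route is fully elementary. It gives $C^\infty$ (indeed real-analytic) dependence on an open neighbourhood of $\mathcal P$. It shows the range lies in $(0,\infty)$ at no extra cost, and it yields a formula from which $L_{\mathcal R}$ could actually be estimated. The paper's route, by contrast, does not rely on having exactly two radical blocs. It would carry over verbatim to any number of blocs with a positive (or irreducible) recruitment matrix. Your implicit-function-theorem remark is essentially the paper's argument.

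For the Lipschitz bound, your use of convexity plus the mean value inequality along segments is cleaner than the paper's bare appeal to compactness. Compactness alone gives the bound only if one knows the map is $C^1$ on a neighbourhood and then treats nearby and distant pairs of points separately, a step the paper leaves implicit. The convexity built into the definition of $\mathcal P$ makes your estimate immediate.
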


\begin{proof}
See \ref{app:lem_RLA_regular}.
\end{proof}

The main delay theorem below assumes a scalar ordering relation between
\(r_{\mathrm{eff}}\) and \(r_\star\). The next proposition gives a general
sufficient condition for that hypothesis.

\begin{proposition}[Scalar monotone path criterion]
\label{prop:scalar_lag_scalar}
Let \(J\subset\mathbb R\) be an interval, let \(\Theta:J\to\mathcal P\) be a
\(C^1\) path, and suppose that the scalar map
\[
  s\mapsto \mathcal R_{\mathrm{LA}}(\Theta(s))
\]
is nondecreasing on \(J\). Let \(I=[t_0,t_1]\), and assume that
\[
  \theta_\star(t)=\Theta(s_\star(t)),
  \qquad
  \theta(t)=\Theta(s(t)),
\]
where \(s_\star,s\in W^{1,\infty}(I;J)\) satisfy
\[
  \dot s(t)=\kappa_\theta\bigl(s_\star(t)-s(t)\bigr)
  \qquad
  \text{for a.e. }t\in I,
\]
with \(\kappa_\theta>0\), together with
\[
  \dot s_\star(t)\ge0 \quad\text{for a.e. }t\in I,
  \qquad
  s(t_0)\le s_\star(t_0).
\]
Then
\[
  s(t)\le s_\star(t)
  \qquad
  \text{for all }t\in I,
\]
and consequently
\[
  r_{\mathrm{eff}}(t)\le r_\star(t)
  \qquad
  \text{for all }t\in I.
\]
Thus the scalar lag condition \eqref{eq:scalar_lag_delay} holds with
\(\sigma=+1\). The reversed inequalities imply the corresponding conclusion for
\(\sigma=-1\).
\end{proposition}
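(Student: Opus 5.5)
The plan is to reduce the claim to a one-dimensional comparison for the gap $w(t):=s_\star(t)-s(t)$, and then push the resulting ordering through the monotone scalar map $s\mapsto\mathcal R_{\mathrm{LA}}(\Theta(s))$.

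\emph{Step 1: an integrating-factor representation for $w$.} Since $s_\star,s\in W^{1,\infty}(I;J)$, both are Lipschitz, so $w$ is absolutely continuous. For a.e.\ $t\in I$,
\[
  \dot w(t)=\dot s_\star(t)-\kappa_\theta\,w(t).
\]
I will work with $e^{\kappa_\theta t}w(t)$. This product is absolutely continuous, and its derivative is $e^{\kappa_\theta t}\dot s_\star(t)\ge0$ almost everywhere. Integrating from $t_0$ gives
\[
  w(t)=e^{-\kappa_\theta(t-t_0)}w(t_0)+\int_{t_0}^{t}e^{-\kappa_\theta(t-\tau)}\dot s_\star(\tau)\,d\tau .
\]
Both terms are nonnegative, because $w(t_0)\ge0$ by hypothesis and $\dot s_\star\ge0$ a.e. Hence $s(t)\le s_\star(t)$ for every $t\in I$.

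\emph{Step 2: transfer to the threshold paths.} Both $s(t)$ and $s_\star(t)$ lie in $J$, and $s\mapsto\mathcal R_{\mathrm{LA}}(\Theta(s))$ is nondecreasing there. Therefore
\[
  r_{\mathrm{eff}}(t)=\mathcal R_{\mathrm{LA}}(\Theta(s(t)))\le\mathcal R_{\mathrm{LA}}(\Theta(s_\star(t)))=r_\star(t).
\]
This is the scalar lag condition with $\sigma=+1$.

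\emph{Step 3: the case $\sigma=-1$.} Here the hypotheses are reversed: $\dot s_\star\le0$ a.e.\ and $s(t_0)\ge s_\star(t_0)$. I will apply the same representation to $-w$, which gives $s\ge s_\star$ on $I$. Monotonicity of $\mathcal R_{\mathrm{LA}}\circ\Theta$ then yields $r_{\mathrm{eff}}\ge r_\star$. Equivalently, one can reparametrise with $\tilde s=-s$ and $\tilde\Theta(\tilde s)=\Theta(-\tilde s)$; note that $\mathcal R_{\mathrm{LA}}\circ\tilde\Theta$ is nonincreasing, so the inequality in Step 2 flips accordingly.

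\emph{Regularity, the only delicate point.} The one place needing care is justifying the integrating-factor computation for merely Lipschitz functions. It is enough that products and sums of absolutely continuous functions on a compact interval are absolutely continuous, so the fundamental theorem of calculus for AC functions applies. Two further remarks complete the argument. First, $\Theta(s(t))$ is consistent with the vector relaxation law \eqref{eq:theta_relax} only when $\Theta$ is affine; the proposition, however, only uses the scalar law as stated, so no consistency check is needed. Second, $s(t)\in J$ is part of the hypothesis $s\in W^{1,\infty}(I;J)$, so evaluating $\Theta$ along the path is legitimate.
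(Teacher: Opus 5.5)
Your proof is correct and follows the paper's argument. The paper sets $e=s-s_\star$, observes $\dot e=-\kappa_\theta e-\dot s_\star\le-\kappa_\theta e$ with $e(t_0)\le0$, concludes $e\le0$ by scalar comparison, and then transfers the ordering through the nondecreasing map $s\mapsto\mathcal R_{\mathrm{LA}}(\Theta(s))$; your integrating-factor formula for $w=-e$ simply makes that comparison step explicit, and your handling of $\sigma=-1$ matches the paper's ``reversed inequalities'' remark.
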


\begin{proof}
Set \(e(t):=s(t)-s_\star(t)\). Then
\[
  \dot e(t)=-\kappa_\theta e(t)-\dot s_\star(t)\le -\kappa_\theta e(t)
\]
for a.e. \(t\in I\). Since \(e(t_0)\le0\), scalar comparison gives
\(e(t)\le0\) on \(I\). Hence \(s(t)\le s_\star(t)\), and the monotonicity of
\(s\mapsto \mathcal R_{\mathrm{LA}}(\Theta(s))\) yields
\[
  r_{\mathrm{eff}}(t)
  =
  \mathcal R_{\mathrm{LA}}(\Theta(s(t)))
  \le
  \mathcal R_{\mathrm{LA}}(\Theta(s_\star(t)))
  =
  r_\star(t).
\]
\end{proof}

We now turn to delayed threshold passage itself. The sign parameter
\[
  \sigma\in\{+1,-1\}
\]
encodes the direction of crossing:
\[
  \sigma=+1
  \quad\text{for upward passage }(r_\star:<1\to>1),
\]
\[
  \sigma=-1
  \quad\text{for downward passage }(r_\star:>1\to<1).
\]

\begin{theorem}[Delayed threshold passage of the effective regime]
\label{thm:delay}
Let \(I=[t_0,t_1]\) be a compact interval, let
\[
  \theta_\star,\theta\in W^{1,\infty}(I;\mathcal P),
\]
and assume that \(\theta\) satisfies \eqref{eq:theta_relax} with some
\(\kappa_\theta>0\). Fix \(\sigma\in\{+1,-1\}\) and suppose that:

\begin{enumerate}[label=\textup{(D\arabic*)},leftmargin=3.2em]
  \item \label{it:D1_delay_new}
  there exists a unique time \(t_\star\in(t_0,t_1)\) such that
  \begin{equation}\label{eq:signed_crossing_delay}
    \sigma\bigl(r_\star(t)-1\bigr)<0
    \quad \text{for } t\in[t_0,t_\star),
    \qquad
    r_\star(t_\star)=1;
  \end{equation}

  \item \label{it:D2_delay_new}
  there exist constants \(m_\star>0\) and \(\Delta_\star>0\) such that
  \(t_\star+\Delta_\star\le t_1\) and
  \begin{equation}\label{eq:transversal_delay}
    \sigma\bigl(r_\star(t)-1\bigr)
    \ge
    m_\star (t-t_\star)
    \qquad
    \text{for all } t\in[t_\star,t_\star+\Delta_\star];
  \end{equation}

  \item \label{it:D3_delay_new}
  the scalar lag condition
  \begin{equation}\label{eq:scalar_lag_delay}
    \sigma\bigl(r_{\mathrm{eff}}(t)-1\bigr)
    \le
    \sigma\bigl(r_\star(t)-1\bigr)
    \qquad
    \text{for all } t\in I
  \end{equation}
  holds;

  \item \label{it:D4_delay_new}
  there exists \(V_\star<\infty\) such that
  \begin{equation}\label{eq:target_speed_delay}
    \|\dot\theta_\star(t)\|
    \le
    V_\star
    \qquad
    \text{for a.e. }t\in I.
  \end{equation}
\end{enumerate}

Define
\begin{equation}\label{eq:E_star_delay}
  E_\star
  :=
  \|\theta(t_0)-\theta_\star(t_0)\|
  +
  \frac{V_\star}{\kappa_\theta},
\end{equation}
and assume that
\begin{equation}\label{eq:delay_smallness}
  \frac{L_{\mathcal R}E_\star}{m_\star}\le \Delta_\star.
\end{equation}
Then the effective crossing time
\begin{equation}\label{eq:teff_sigma}
  t_{\mathrm{eff}}^\sigma
  :=
  \inf\Bigl\{
    t\in I:\ \sigma\bigl(r_{\mathrm{eff}}(t)-1\bigr)\ge 0
  \Bigr\}
\end{equation}
is well defined and satisfies
\begin{equation}\label{eq:delay_bound}
  0\le t_{\mathrm{eff}}^\sigma-t_\star
  \le
  \frac{L_{\mathcal R}E_\star}{m_\star}.
\end{equation}
In particular, the effective regime cannot cross the frozen threshold before
the target regime, and it must cross within a delay window of length
\(L_{\mathcal R}E_\star/m_\star\) after target passage.
\end{theorem}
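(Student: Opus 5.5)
The proof splits into a lower bound and an upper bound on \(t_{\mathrm{eff}}^\sigma\). The lower bound comes from (D1) and (D3). The upper bound comes from a uniform estimate on the implementation gap \(\|\theta(t)-\theta_\star(t)\|\), transferred to threshold values by Lemma~\ref{lem:RLA_regular} and then compared with the transversal growth (D2).

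\emph{Lower bound.} For \(t\in[t_0,t_\star)\), hypotheses (D3) and (D1) give
\[
\sigma(r_{\mathrm{eff}}(t)-1)\le\sigma(r_\star(t)-1)<0 .
\]
Hence no \(t<t_\star\) belongs to the set in \eqref{eq:teff_sigma}, and \(t_{\mathrm{eff}}^\sigma\ge t_\star\) as soon as that set is nonempty.

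\emph{Gap estimate.} Set \(e(t):=\theta(t)-\theta_\star(t)\). Since \(\theta,\theta_\star\in W^{1,\infty}\), for a.e.\ \(t\) we have \(\dot e=-\kappa_\theta e-\dot\theta_\star\). Variation of constants then gives
\[
e(t)=e^{-\kappa_\theta(t-t_0)}e(t_0)-\int_{t_0}^t e^{-\kappa_\theta(t-s)}\dot\theta_\star(s)\,ds .
\]
Using (D4), \(\|e(t)\|\le \|e(t_0)\|+V_\star\kappa_\theta^{-1}(1-e^{-\kappa_\theta(t-t_0)})\le E_\star\) for all \(t\in I\). This is the only place the relaxation law \eqref{eq:theta_relax} enters. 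Lemma~\ref{lem:RLA_regular} then yields \(|r_{\mathrm{eff}}(t)-r_\star(t)|\le L_{\mathcal R}E_\star\) on \(I\).

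\emph{Upper bound.} Put \(\tau:=L_{\mathcal R}E_\star/m_\star\), so \(t_\star+\tau\in I\) by \eqref{eq:delay_smallness} and (D2). If \(\tau=0\), then \(E_\star=0\), so \(r_{\mathrm{eff}}=r_\star\) and \(t_\star\) itself lies in the set. Otherwise, at \(t=t_\star+\tau\) we have \(\sigma(r_\star(t)-1)\ge m_\star\tau=L_{\mathcal R}E_\star\), and therefore
\[
\sigma(r_{\mathrm{eff}}(t)-1)\ge \sigma(r_\star(t)-1)-L_{\mathcal R}E_\star\ge0 .
\]
So the set in \eqref{eq:teff_sigma} is nonempty, \(t_{\mathrm{eff}}^\sigma\) is well defined, and \(t_{\mathrm{eff}}^\sigma\le t_\star+\tau\). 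Combined with the lower bound this gives \eqref{eq:delay_bound}.

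Continuity of \(r_{\mathrm{eff}}\), which follows from Lemma~\ref{lem:RLA_regular} and continuity of \(\theta\), is not needed for this bound. It only shows that the infimum is attained, so that \(r_{\mathrm{eff}}(t_{\mathrm{eff}}^\sigma)=1\) when \(t_{\mathrm{eff}}^\sigma>t_0\).

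The one point needing care is the gap estimate. The variation-of-constants formula must be justified for \(W^{1,\infty}\) data, which holds because \(e\) is absolutely continuous and the ODE holds a.e. The estimate must also be uniform in \(t\), not only valid near \(t_\star\). The remaining steps are direct inequalities.
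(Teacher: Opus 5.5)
Your proposal is correct and follows the paper's proof essentially step for step. Both arguments use the gap \(e=\theta-\theta_\star\) and variation of constants to get \(\|e(t)\|\le E_\star\) uniformly on \(I\), transfer this to \(|r_{\mathrm{eff}}-r_\star|\le L_{\mathcal R}E_\star\) via Lemma~\ref{lem:RLA_regular}, rule out early crossing with (D1) and (D3), and evaluate at \(t_\star+L_{\mathcal R}E_\star/m_\star\) using (D2). Your observation that continuity of \(r_{\mathrm{eff}}\) is not needed is right: membership of \(t_\star+\tau\) in the set already bounds the infimum, so the paper's appeal to continuity there is superfluous. Your separate treatment of \(\tau=0\) is harmless but also unnecessary, since the general argument covers that case.
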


\begin{proof}
See \ref{app:thm_delay}.
\end{proof}

The novelty of Theorem~\ref{thm:delay} is model-specific rather than
methodological: the frozen Perron--Frobenius threshold becomes an explicit
implementation-lag bound for the target/effective split.

The estimate becomes especially transparent when the target and effective
regimes agree at the beginning of the crossing episode.

\begin{corollary}[$O(\kappa_\theta^{-1})$ delay under initial matching]
\label{cor:delay_matching}
Under the assumptions of Theorem~\ref{thm:delay}, if in addition
\begin{equation}\label{eq:matching_delay}
  \theta(t_0)=\theta_\star(t_0),
\end{equation}
then
\begin{equation}\label{eq:delay_bound_matching}
  0\le t_{\mathrm{eff}}^\sigma-t_\star
  \le
  \frac{L_{\mathcal R}}{m_\star}\,\frac{V_\star}{\kappa_\theta}.
\end{equation}
\end{corollary}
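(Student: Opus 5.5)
The corollary follows by specialising Theorem~\ref{thm:delay}, so the plan is substitution rather than a new argument. Under \eqref{eq:matching_delay} the first summand in the definition \eqref{eq:E_star_delay} vanishes, which leaves
\[
  E_\star=\frac{V_\star}{\kappa_\theta}.
\]
All hypotheses (D1)--(D4) and the smallness condition \eqref{eq:delay_smallness} are inherited, since the corollary is stated "under the assumptions of Theorem~\ref{thm:delay}". In particular, \eqref{eq:delay_smallness} now reads $L_{\mathcal R}V_\star/(m_\star\kappa_\theta)\le\Delta_\star$. Inserting this value of $E_\star$ into \eqref{eq:delay_bound} gives \eqref{eq:delay_bound_matching} directly. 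The lower bound $0\le t_{\mathrm{eff}}^\sigma-t_\star$ is carried over unchanged.

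For transparency, I would also explain why $E_\star$ is the relevant size, since this is the content behind the theorem. Set $e(t):=\theta(t)-\theta_\star(t)$. By \eqref{eq:theta_relax},
\[
  \dot e=-\kappa_\theta e-\dot\theta_\star \quad\text{a.e.}
\]
Variation of constants together with (D4) then yields
\[
  \|e(t)\|\le e^{-\kappa_\theta(t-t_0)}\|e(t_0)\|+\frac{V_\star}{\kappa_\theta}\bigl(1-e^{-\kappa_\theta(t-t_0)}\bigr)\le E_\star.
\]
Under matching, the first term vanishes, so $\|e(t)\|\le V_\star/\kappa_\theta$ uniformly on $I$.

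Lemma~\ref{lem:RLA_regular} converts this into $|r_{\mathrm{eff}}(t)-r_\star(t)|\le L_{\mathcal R}V_\star/\kappa_\theta$. At $t=t_\star+L_{\mathcal R}E_\star/m_\star\le t_\star+\Delta_\star$, the transversality bound (D2) gives $\sigma(r_\star(t)-1)\ge L_{\mathcal R}E_\star$. Hence $\sigma(r_{\mathrm{eff}}(t)-1)\ge0$ at that time, which forces $t_{\mathrm{eff}}^\sigma$ to lie in the stated window. For the lower bound, (D1) and (D3) give $\sigma(r_{\mathrm{eff}}-1)\le\sigma(r_\star-1)<0$ before $t_\star$.

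No step presents a real obstacle. The only point needing care is confirming that \eqref{eq:delay_smallness} remains meaningful after setting the mismatch term to zero: it does, and it now requires $\kappa_\theta$ to be large relative to $L_{\mathcal R}V_\star/(m_\star\Delta_\star)$. The resulting bound scales as $O(\kappa_\theta^{-1})$, as claimed.
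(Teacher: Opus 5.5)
Your proposal is correct and follows the paper's own proof: under \eqref{eq:matching_delay} the mismatch term in \eqref{eq:E_star_delay} vanishes, so $E_\star=V_\star/\kappa_\theta$, and substituting this into \eqref{eq:delay_bound} gives the claim. Your supplementary recap (the variation-of-constants error bound, the Lipschitz transfer via Lemma~\ref{lem:RLA_regular}, and evaluation at $t_\star+L_{\mathcal R}E_\star/m_\star$) reproduces the appendix proof of Theorem~\ref{thm:delay}; it is consistent with that proof but not needed for the corollary itself.
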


\begin{proof}
Under \eqref{eq:matching_delay}, $E_\star=V_\star/\kappa_\theta$;
substitute into \eqref{eq:delay_bound}.
\end{proof}

Theorem~\ref{thm:delay} concerns threshold passage in parameter space only. It
does not quantify the additional time the state trajectory may remain near the
radical-free branch after \(t_{\mathrm{eff}}^\sigma\). The following example
illustrates both the sharp parameter-space delay and the further post-crossing
state lag.

\begin{example}[Monotone ramp and nearly sharp delay bound]
\label{ex:delay_ramp}
To visualize the implementation-lag mechanism, we consider an upward
crossing (\(\sigma=+1\)) in the symmetric reduction \eqref{eq:model_symm},
varying only the aggregate recruitment parameter. Let
\[
  \mu=1,\qquad \delta=0.3,\qquad \eta=0.2,\qquad \rho=0.1,
\]
and prescribe the target ramp
\[
  \beta_\star(t)=2+0.012\,t,
  \qquad
  \dot\beta=\kappa_\theta(\beta_\star(t)-\beta),
  \qquad
  \kappa_\theta=0.2,
\]
with initial matching
\[
  \beta(0)=\beta_\star(0)=2.
\]
Then
\[
  \beta(t)=2+0.012\,t-0.06\bigl(1-e^{-0.2t}\bigr),
\]
and
\[
  r_\star(t)=\mathcal R_{\mathrm{LA}}^{\mathrm{sym}}(\beta_\star(t))
  =\frac{\beta_\star(t)}{3}+0.2,
  \qquad
  r_{\mathrm{eff}}(t)=\mathcal R_{\mathrm{LA}}^{\mathrm{sym}}(\beta(t))
  =\frac{\beta(t)}{3}+0.2.
\]
Here the target and effective regimes lie on the one-parameter path
\(\Theta(\beta)\) obtained by varying \(\beta\) while keeping
\((\mu,\delta,\eta,\rho)\) fixed. Since
\(\mathcal R_{\mathrm{LA}}^{\mathrm{sym}}(\beta)\) is affine increasing in
\(\beta\), Proposition~\ref{prop:scalar_lag_scalar} implies that the scalar
lag condition \eqref{eq:scalar_lag_delay} holds automatically with
\(\sigma=+1\).

The target threshold is crossed at
\[
  t_\star=\frac{2.4-2}{0.012}=\frac{100}{3}\approx 33.33,
\]
whereas the effective threshold is crossed later at
\[
  t_{\mathrm{eff}}\approx 38.33.
\]
In this one-parameter affine setting the slope of
\(\mathcal R_{\mathrm{LA}}^{\mathrm{sym}}\) with respect to \(\beta\)
gives the Lipschitz constant directly:
\[
  L_{\mathcal R}=\frac{\rho}{\mu(\eta+\rho)}=\frac13,
  \qquad
  V_\star=0.012,
  \qquad
  m_\star=\dot r_\star(t)=0.004.
\]
Corollary~\ref{cor:delay_matching} then yields the explicit bound
\[
  0\le t_{\mathrm{eff}}-t_\star
  \le
  \frac{L_{\mathcal R}}{m_\star}\,\frac{V_\star}{\kappa_\theta}
  =5.
\]
The observed delay is
\[
  t_{\mathrm{eff}}-t_\star\approx 4.998,
\]
so the bound is nearly attained in this monotone ramp example.

To connect the parameter-space statement with the state dynamics, we solve the
symmetric system \eqref{eq:model_symm} with initial condition
\[
  P(0)=10^{-3},
  \qquad
  A(0)=\frac{\eta}{\eta+\rho}=\frac23.
\]
For a visibility threshold \(\xi \ge P(0)\), let
\(t_{\mathrm{obs}}(\xi) := \inf\{t\ge t_{\mathrm{eff}}:\ P(t)\ge \xi\}\)
denote the first time after effective threshold crossing at which the
radical share reaches level \(\xi\). The baseline below uses \(\xi=P(0)\) as the visibility criterion;
Table~\ref{tab:obs_delay} shows that the qualitative decomposition is
robust across larger values of \(\xi\).

The numerical solution stays close to the radical-free branch throughout the
parameter-delay window and develops visible radical growth only later.
Figure~\ref{fig:delay} shows the separation between target and effective
threshold crossing for \(\kappa_\theta=0.2\), and
Table~\ref{tab:delay_decomposition} reports the decomposition of the total lag
into the parameter-space part \(t_{\mathrm{eff}}-t_\star\) and the additional
state-space part \(t_{\mathrm{obs}}-t_{\mathrm{eff}}\). In this example the
parameter delay follows the sharp \(O(\kappa_\theta^{-1})\) prediction,
whereas the state-space delay is substantially larger and varies much more
weakly with \(\kappa_\theta\).

\begin{table}[t]
\centering
\caption{Decomposition of parameter-space and state-space delay in
Example~\ref{ex:delay_ramp}.}
\label{tab:delay_decomposition}
\begin{tabular}{@{}ccccc@{}}
\toprule
\(\kappa_\theta\) & \(t_\star\) & \(t_{\mathrm{eff}}\) &
\(t_{\mathrm{eff}}-t_\star\) & \(t_{\mathrm{obs}}-t_{\mathrm{eff}}\) \\
\midrule
0.10 & 33.33 & 43.20 & 9.87 & 41.1 \\
0.20 & 33.33 & 38.33 & 4.998 & 37.7 \\
0.50 & 33.33 & 35.33 & 2.00 & 35.2 \\
\bottomrule
\end{tabular}
\end{table}

Table~\ref{tab:obs_delay} shows that this qualitative decomposition is
robust to the choice of visibility threshold $\xi$. The parameter-space
delay $t_{\mathrm{eff}}-t_\star$ retains its sharp $O(\kappa_\theta^{-1})$
scaling across all four criteria, while the state-space delay
$t_{\mathrm{obs}}(\xi)-t_{\mathrm{eff}}$ remains substantially larger and
varies much more weakly with $\kappa_\theta$ regardless of $\xi$.

\begin{table}[t]
\centering
\caption{Decomposition of threshold delay across alternative visibility
criteria. For each implementation speed $\kappa_\theta$ and each
observable level $\xi$, the parameter-space delay
$t_{\mathrm{eff}}-t_\star$ (proved to scale as
$O(\kappa_\theta^{-1})$) and the state-space delay
$t_{\mathrm{obs}}(\xi)-t_{\mathrm{eff}}$ are reported. The seed level
is $P(0)=10^{-3}$; $t_\star\approx 33.3$ throughout.}
\label{tab:obs_delay}
\begin{tabular}{@{}ccccccc@{}}
\toprule
& & & \multicolumn{4}{c}{$t_{\mathrm{obs}}(\xi)-t_{\mathrm{eff}}$} \\
\cmidrule(l){4-7}
$\kappa_\theta$ & $t_{\mathrm{eff}}$ & $t_{\mathrm{eff}}-t_\star$
  & $\xi=P(0)$ & $\xi=2P(0)$ & $\xi=0.01$ & $\xi=0.05$ \\
\midrule
0.10 & 43.20 & 9.87 & 41.1 & 45.2 & 53.7 & 62.3 \\
0.20 & 38.33 & 5.00 & 37.7 & 42.1 & 51.1 & 60.0 \\
0.50 & 35.33 & 2.00 & 35.2 & 39.9 & 49.3 & 58.5 \\
\bottomrule
\end{tabular}
\end{table}

Figure~\ref{fig:delay_scaling} plots \(t_{\mathrm{eff}}-t_\star\) against
\(1/\kappa_\theta\) for seven implementation speeds together with the
theoretical line \(1/\kappa_\theta\) from
Corollary~\ref{cor:delay_matching}, showing close agreement throughout the
tested range. Numerical trajectories were computed with
\texttt{scipy.integrate.solve\_ivp} \cite{scipy2020}.
\end{example}

\begin{figure}[t]
\centering
\includegraphics[width=0.60\textwidth]{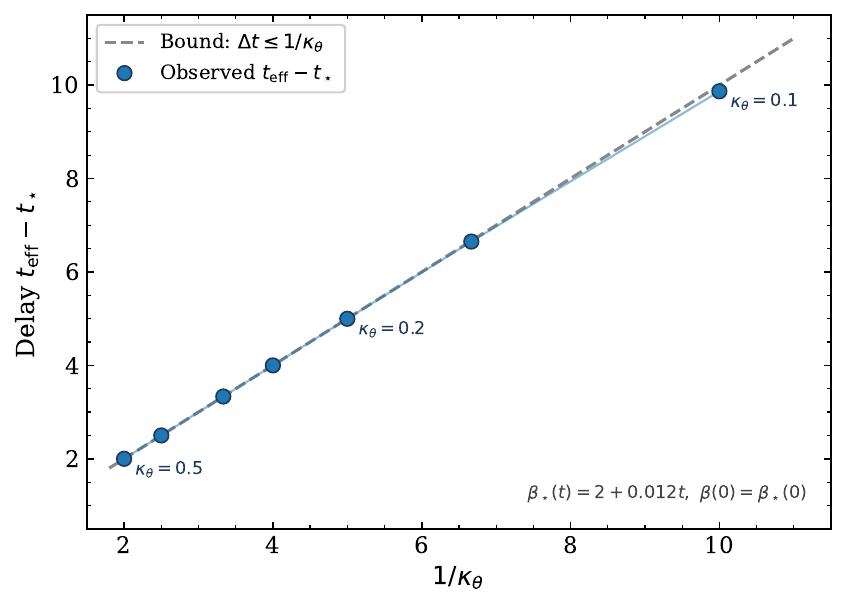}
\caption{Parameter-space delay vs.\ implementation speed,
Example~\ref{ex:delay_ramp}. Circles: observed \(t_{\mathrm{eff}}-t_\star\);
dashed: bound \(1/\kappa_\theta\) from Corollary~\ref{cor:delay_matching}.}
\label{fig:delay_scaling}
\end{figure}

\begin{figure}[t]
\centering
\begin{subfigure}[t]{0.48\textwidth}
  \centering
  \includegraphics[width=\textwidth]{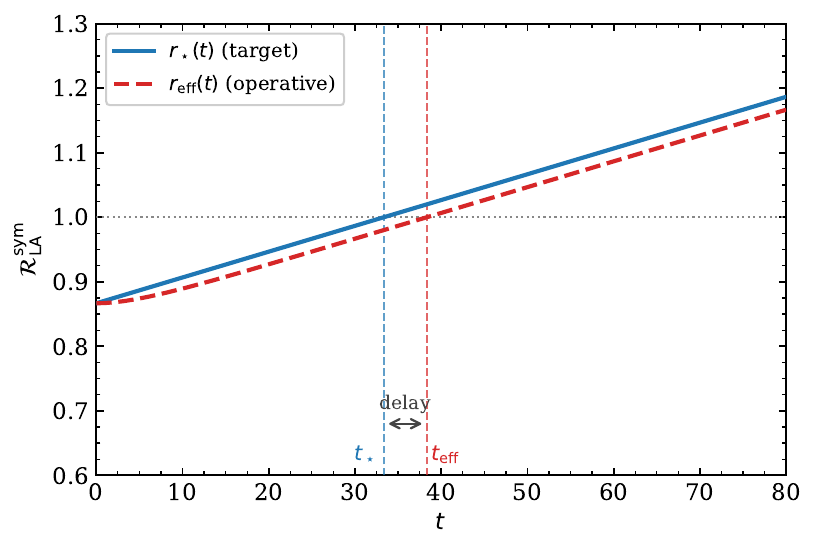}
  \caption{Target and effective threshold trajectories.}
\end{subfigure}\hfill
\begin{subfigure}[t]{0.48\textwidth}
  \centering
  \includegraphics[width=\textwidth]{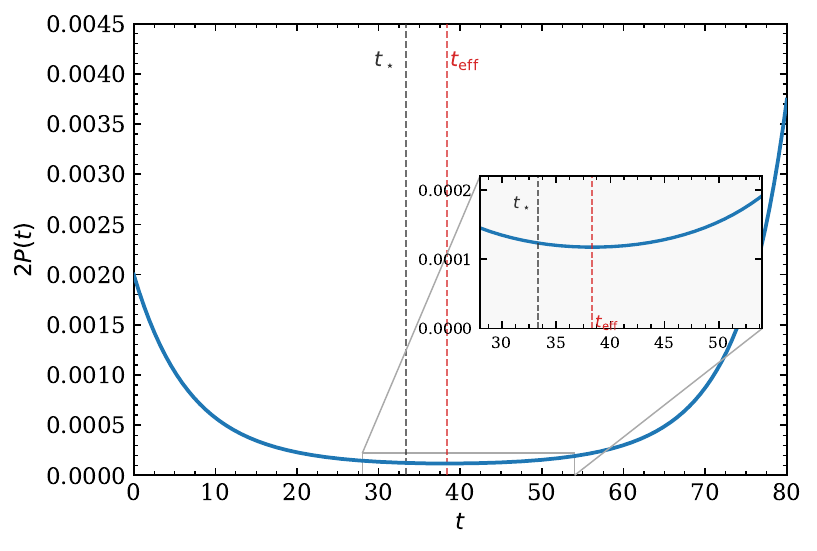}
  \caption{State response after effective threshold passage.}
\end{subfigure}
\caption{Implementation lag in Example~\ref{ex:delay_ramp}
(\(\kappa_\theta=0.2\)). Vertical lines: \(t_\star\) and \(t_{\mathrm{eff}}\).}
\label{fig:delay}
\end{figure}

\begin{figure}[t]
\centering
\includegraphics[width=0.72\textwidth]{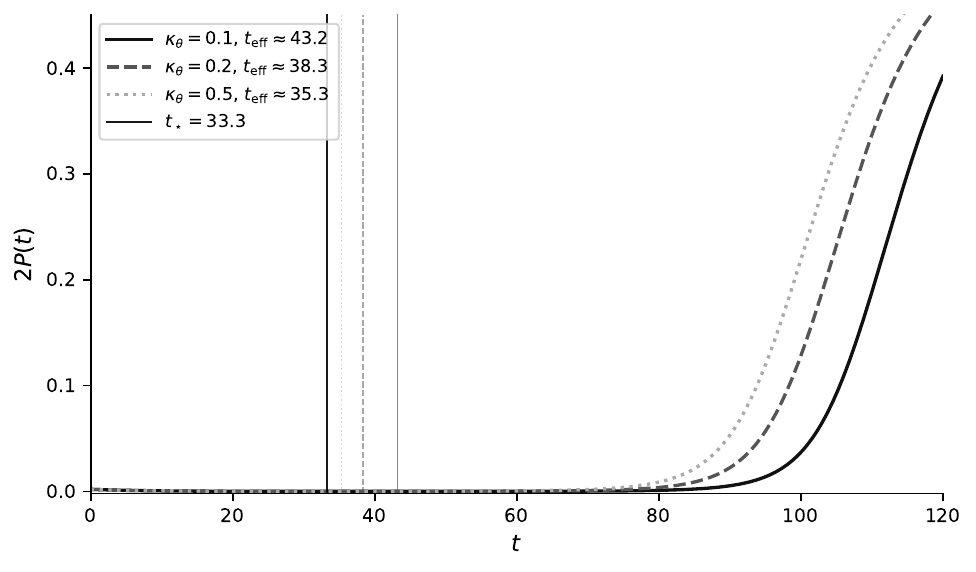}
\caption{Radical share \(2P(t)\) for \(\kappa_\theta\in\{0.1,0.2,0.5\}\),
Example~\ref{ex:delay_ramp}. Black vertical line: \(t_\star\approx 33.3\);
grey lines: \(t_{\mathrm{eff}}\).}
\label{fig:delay_state_compare}
\end{figure}

In political terms, the theorem and example describe a situation in which a
society has already crossed the \emph{target} threshold for radicalisation,
while the \emph{effective} institutional environment remains temporarily on the
old side of the instability boundary. This creates a mathematically precise lag window between intended
structural change and effective voter dynamics.

\begin{remark}[Piecewise-smooth target paths]
Theorem~\ref{thm:delay} extends to piecewise-smooth \(\theta_\star\) by
restarting the estimate at each jump discontinuity \(t_j\) with initial
mismatch \(\|\theta(t_j^+)-\theta_\star(t_j^+)\|\).
\end{remark}

\FloatBarrier
\section{Discussion}\label{sec:discussion}

The main substantive conclusion of the paper is that the frozen threshold is
preventive rather than curative. Persistent alienation can create
below-threshold coexistence of a radical-free centrist--alienated equilibrium
and a positive radicalised equilibrium, separated by a saddle born at a
saddle-node bifurcation. Returning to \(\mathcal R_{\mathrm{LA}}<1\) therefore
restores only local stability of the radical-free branch; it does not
guarantee recovery once the state has entered the basin of the radicalised
attractor.

Equivalently, the stable manifold of the saddle acts as a social tipping
boundary. A policy environment may be subcritical in the local
Perron--Frobenius sense and still contain a finite-amplitude radicalised
attractor. If a shock, a long period of weak implementation, or gradual
accumulation of alienation pushes the state across this basin boundary,
radical support can remain self-sustaining even after the original policy
environment is restored. The warning for policymakers is therefore temporal:
restoring the local threshold late is not equivalent to preventing threshold
loss early. Effective intervention must either keep trajectories on the
radical-free side of the basin boundary or alter the basin geometry itself,
for example by reducing chronic alienation and the mobilisation rate from the
alienation reservoir.

Delayed implementation adds a second source of inertia. Because the political
environment is split into a target regime \(\theta_\star(t)\) and an effective
regime \(\theta(t)\), announced threshold restoration need not coincide with
operative threshold restoration. When a visible post-crossing state response exists, the three times
\(t_\star < t_{\mathrm{eff}} \le t_{\mathrm{obs}}\) separate for
distinct reasons: the first gap \(t_{\mathrm{eff}}-t_\star\) is a
parameter-space delay generated by finite implementation speed, whereas 
the second gap \(t_{\mathrm{obs}}-t_{\mathrm{eff}}\) is a state-space delay
generated by basin geometry and initial condition. In the monotone ramp
example the first lag follows the sharp \(O(\kappa_\theta^{-1})\) prediction
almost exactly, whereas the second is substantially larger and much less
sensitive to \(\kappa_\theta\). In scenarios where the state has entered the basin of the radicalised
attractor before threshold restoration, \(t_{\mathrm{obs}}\) need not
be finite.

These two effects are the substantive payoff of the analysis. The
Perron--Frobenius threshold, frozen equilibrium geometry, and local adiabatic
theorem are the tools that make them precise. They are not claimed as
stand-alone methodological innovations; their role is to establish rigorously
that delayed institutional implementation and persistent alienation interact to
produce hysteresis-type path dependence and a two-stage delay in radical
support dynamics.

The main limitations are equally clear. The adiabatic theorem is local
in state space, and the delay theorem bounds threshold passage in
parameter space rather than the full post-crossing escape. The model is
structurally transparent rather than calibrated; its value is the
isolation of a mechanism, not a quantitative forecast. Natural extensions
include asymmetric or state-dependent implementation speeds, feedback
from the state to the target regime, stochastic forcing, and a rigorous
analysis of post-crossing state-space delay. On the empirical side, the
parameter/proxy correspondence of Table~\ref{tab:proxies} suggests that
the \(O(\kappa_\theta^{-1})\) bound is in principle testable: if
\(\kappa_\theta\) is proxied by the inverse of the legislative-to-effect
lag, the bound suggests an empirically interpretable hypothesis about
how long radical growth remains suppressed after a reform target is
announced, conditional on future calibration of the proxy mapping.
The German case discussed in Section~\ref{sec:intro} provides one
instance where both the target date and the implementation lag are
independently documented \cite{bundesrechnungshof2021}.


\FloatBarrier
\appendix
\section{Auxiliary proofs}

\subsection{Proof of Proposition~\ref{prop:forward_invariant}}
\label{app:prop_forward_invariant}

\begin{proof}

Local well posedness follows because \(\theta(\cdot)\) is absolutely continuous,
\(\mathcal P\) is compact, and the vector field in \eqref{eq:model} is
continuous in \(t\) and locally Lipschitz in \((L,R,A)\).

It remains to prove forward invariance of \(\mathcal T\). On the faces
\(L=0\), \(R=0\), and \(A=0\),
\[
  \dot L\big|_{L=0}=\gamma_{RL}RC\ge0,\qquad
  \dot R\big|_{R=0}=\gamma_{LR}LC\ge0,\qquad
  \dot A\big|_{A=0}=\eta C\ge0.
\]
On the face \(C=0\),
\[
  \dot C=-\mathbf 1^\top\dot v-\dot A
  =\mu_L L+\mu_R R+\rho A\ge0,
\]
because the mobilisation terms cancel. Thus the vector field points inward on
every boundary face of \(\mathcal T\), so \(\mathcal T\) is forward invariant.
Since \(\mathcal T\) is compact and positively invariant, no finite-time
blow-up can occur, and the local solution extends globally.

\end{proof}

\subsection{Proof of Proposition~\ref{prop:uniform_hyperbolicity}}
\label{app:prop_uniform_hyperbolicity}

\begin{proof}

By Theorem~\ref{thm:frozen_threshold}, every \(\theta\in\mathcal P^-_\nu\)
makes the frozen radical-free equilibrium \(x^\dagger(\theta)\) locally
asymptotically stable. Hence
\[
  s\!\bigl(\mathcal J^\dagger(\theta)\bigr)<0
  \qquad
  \text{for all }\theta\in\mathcal P^-_\nu.
\]
The map \(\theta\mapsto \mathcal J^\dagger(\theta)\) is continuous because
\(F\) is smooth in both variables and \(x^\dagger(\theta)\) is a smooth
function of \((\eta,\rho)\). Therefore
\[
  \theta\mapsto s\!\bigl(\mathcal J^\dagger(\theta)\bigr)
\]
is continuous on the compact set \(\mathcal P^-_\nu\), so
\[
  -\omega_\nu
  :=
  \max_{\theta\in\mathcal P^-_\nu}
  s\!\bigl(\mathcal J^\dagger(\theta)\bigr)
  <0,
\]
which proves \eqref{eq:uniform_hurwitz_adiabatic}.

For each fixed Hurwitz matrix \(\mathcal J^\dagger(\theta)\), the Lyapunov
equation \eqref{eq:lyapunov_eq_adiabatic} has the unique symmetric positive
definite solution
\begin{equation}\label{eq:P_integral_adiabatic}
  P(\theta)
  =
  \int_0^\infty
    e^{\mathcal J^\dagger(\theta)^\top s}
    e^{\mathcal J^\dagger(\theta)s}\,ds.
\end{equation}
By \eqref{eq:uniform_hurwitz_adiabatic} and continuity of
\(\theta\mapsto \mathcal J^\dagger(\theta)\), there exist constants
\(C_\nu,\omega'_\nu>0\) such that
\[
  \bigl\|e^{\mathcal J^\dagger(\theta)s}\bigr\|
  \le
  C_\nu e^{-\omega'_\nu s}
  \qquad
  \text{for all }\theta\in\mathcal P^-_\nu,\ s\ge0.
\]
Hence the integral \eqref{eq:P_integral_adiabatic} converges uniformly on
\(\mathcal P^-_\nu\). Since \(\mathcal J^\dagger\) is \(C^1\) on the ambient
parameter set \(\mathcal P\), differentiation under the integral sign shows
that \(P\) is the restriction to \(\mathcal P^-_\nu\) of a \(C^1\) map defined
on a neighbourhood \(U_\nu\) of \(\mathcal P^-_\nu\).

The bounds \eqref{eq:P_bounds_adiabatic} now follow from continuity and
compactness. Since each \(P(\theta)\) is positive definite, the minimal and
maximal eigenvalue functions attain positive finite extrema on
\(\mathcal P^-_\nu\), which gives \(m_\nu\) and \(M_\nu\); the derivative
bound follows from compactness of \(\mathcal P^-_\nu\).

\end{proof}

\subsection{Proof of Theorem~\ref{thm:adiabatic_tracking}}
\label{app:thm_adiabatic}

\begin{proof}

We divide the argument into four steps.

\smallskip
\noindent\emph{Step 1: equation for the deviation from the moving branch.}
Define
\[
  y(t):=x(t)-x^\dagger(\theta(t)).
\]
Because \(F(x^\dagger(\theta),\theta)=0\) for every
\(\theta\in\mathcal P^-_\nu\), we obtain for almost every \(t\in[0,T]\)
\begin{equation}\label{eq:y_eq_adiabatic}
  \dot y
  =
  \mathcal J^\dagger(\theta(t))\,y
  +
  \mathcal N\bigl(y(t),\theta(t)\bigr)
  -
  \dot x^\dagger(\theta(t)),
\end{equation}
where
\begin{equation}\label{eq:N_def_adiabatic}
  \mathcal N(y,\theta)
  :=
  F\bigl(x^\dagger(\theta)+y,\theta\bigr)
  -
  F\bigl(x^\dagger(\theta),\theta\bigr)
  -
  \mathcal J^\dagger(\theta)y.
\end{equation}
Since \(F\) is \(C^2\) on the compact set \(\mathcal T\times\mathcal P\),
there exist constants \(r_0>0\) and \(K_0<\infty\) such that
\begin{equation}\label{eq:remainder_bound_adiabatic}
  \|\mathcal N(y,\theta)\|\le K_0\|y\|^2
\end{equation}
whenever \(\theta\in\mathcal P^-_\nu\), \(\|y\|\le r_0\), and
\(x^\dagger(\theta)+y\in\mathcal T\).

Moreover, the map \(\theta\mapsto x^\dagger(\theta)\) is \(C^1\) on
\(\mathcal P\), because
\[
  x^\dagger(\theta)=\left(0,0,\frac{\eta(\theta)}{\eta(\theta)+\rho(\theta)}\right)
\]
and the denominator \(\eta(\theta)+\rho(\theta)\) is uniformly bounded away
from zero on \(\mathcal P\). Hence there exists a
constant \(B_\nu<\infty\) such that
\begin{equation}\label{eq:xdag_dot_adiabatic}
  \|\dot x^\dagger(\theta(t))\|
  \le
  B_\nu\|\dot\theta(t)\|
  \le
  B_\nu\varepsilon
  \qquad
  \text{for a.e. }t\in[0,T].
\end{equation}

\smallskip
\noindent\emph{Step 2: a parameter-dependent Lyapunov function.}
Let \(P(\theta)\) be the positive definite solution of
\eqref{eq:lyapunov_eq_adiabatic} from
Proposition~\ref{prop:uniform_hyperbolicity}, and define
\[
  V(t):=y(t)^\top P(\theta(t))y(t).
\]
By \eqref{eq:P_bounds_adiabatic},
\begin{equation}\label{eq:V_equiv_adiabatic}
  m_\nu\|y\|^2\le V\le M_\nu\|y\|^2.
\end{equation}
Since \(P\) is \(C^1\) and \(\theta\) is absolutely continuous, \(V\) is
absolutely continuous as well. For almost every \(t\in[0,T]\),
\begin{align}
  \dot V
  &=
  y^\top\bigl(\mathcal J^\dagger(\theta)^\top P(\theta)
  +P(\theta)\mathcal J^\dagger(\theta)\bigr)y
  +2y^\top P(\theta)\mathcal N(y,\theta)
  \notag\\
  &\qquad
  -2y^\top P(\theta)\dot x^\dagger(\theta)
  +y^\top \dot P(\theta)y.
  \label{eq:V_dot_raw_adiabatic}
\end{align}
Using \eqref{eq:lyapunov_eq_adiabatic},
\eqref{eq:remainder_bound_adiabatic},
\eqref{eq:xdag_dot_adiabatic}, and
\[
  \|\dot P(\theta(t))\|
  \le
  \|DP(\theta(t))\|\,\|\dot\theta(t)\|
  \le
  L_\nu \varepsilon,
\]
we obtain, whenever \(\|y(t)\|\le r_0\),
\begin{equation}\label{eq:V_dot_est1_adiabatic}
  \dot V
  \le
  -\|y\|^2
  +2M_\nu K_0\|y\|^3
  +2M_\nu B_\nu\varepsilon\|y\|
  +L_\nu\varepsilon\|y\|^2.
\end{equation}

\smallskip
\noindent\emph{Step 3: closing the differential inequality.}
Choose \(r_\nu\in(0,r_0]\) and \(\varepsilon_\nu\in(0,1]\) such that
\begin{equation}\label{eq:choice_r_eps_adiabatic}
  2M_\nu K_0 r_\nu + L_\nu\varepsilon_\nu \le \frac12.
\end{equation}
Then for every \(0<\varepsilon\le\varepsilon_\nu\) and every time \(t\) such
that \(\|y(t)\|\le r_\nu\), estimate \eqref{eq:V_dot_est1_adiabatic} gives
\begin{equation}\label{eq:V_dot_est2_adiabatic}
  \dot V
  \le
  -\frac12\|y\|^2
  +2M_\nu B_\nu\varepsilon\|y\|.
\end{equation}
Applying Young's inequality,
\[
  2M_\nu B_\nu\varepsilon\|y\|
  \le
  \frac14\|y\|^2+4M_\nu^2B_\nu^2\varepsilon^2,
\]
we obtain
\begin{equation}\label{eq:V_dot_est3_adiabatic}
  \dot V
  \le
  -\frac14\|y\|^2
  +4M_\nu^2B_\nu^2\varepsilon^2
  \le
  -\frac{1}{4M_\nu}V
  +4M_\nu^2B_\nu^2\varepsilon^2.
\end{equation}
Set
\begin{equation}\label{eq:lambda_adiabatic}
  \lambda_\nu:=\frac{1}{8M_\nu}.
\end{equation}
Then \eqref{eq:V_dot_est3_adiabatic} implies
\begin{equation}\label{eq:V_gronwall_adiabatic}
  \dot V
  \le
  -2\lambda_\nu V+4M_\nu^2B_\nu^2\varepsilon^2.
\end{equation}
By Gr\"onwall's inequality,
\begin{equation}\label{eq:V_final_adiabatic}
  V(t)
  \le
  e^{-2\lambda_\nu t}V(0)
  +
  \frac{4M_\nu^2B_\nu^2}{2\lambda_\nu}\varepsilon^2
  \qquad
  \text{for all }t\in[0,T]
\end{equation}
as long as \(\|y(s)\|\le r_\nu\) on \([0,t]\). Using
\eqref{eq:V_equiv_adiabatic}, we get
\begin{equation}\label{eq:y_est_intermediate_adiabatic}
  \|y(t)\|
  \le
  \sqrt{\frac{M_\nu}{m_\nu}}
  e^{-\lambda_\nu t}\|y(0)\|
  +
  \sqrt{\frac{2M_\nu^2B_\nu^2}{m_\nu\lambda_\nu}}\,\varepsilon.
\end{equation}
Define
\begin{equation}\label{eq:c_nu_adiabatic}
  c_\nu
  :=
  \max\left\{
    \sqrt{\frac{M_\nu}{m_\nu}},
    \sqrt{\frac{2M_\nu^2B_\nu^2}{m_\nu\lambda_\nu}}
  \right\}.
\end{equation}
Then
\begin{equation}\label{eq:y_est_clean_adiabatic}
  \|y(t)\|
  \le
  c_\nu e^{-\lambda_\nu t}\|y(0)\|+c_\nu\varepsilon
\end{equation}
as long as \(\|y(s)\|\le r_\nu\) on \([0,t]\).

\smallskip
\noindent\emph{Step 4: bootstrap and conclusion.}
By Proposition~\ref{prop:forward_invariant}, the solution of
\eqref{eq:nonaut_adiabatic} exists on all of \([0,T]\). Let
\[
  t_*:=\sup\{t\in[0,T]:\ \|y(s)\|\le r_\nu\text{ for all }s\in[0,t]\}.
\]
Since \eqref{eq:initial_small_adiabatic} holds, one has \(t_*>0\). If
\(t_*<T\), then continuity of \(y\) implies \(\|y(t_*)\|=r_\nu\). But applying \eqref{eq:y_est_clean_adiabatic} at time \(t=t_*\) gives
\[
  \|y(t_*)\|
  \le
  c_\nu e^{-\lambda_\nu t_*}\|y(0)\|+c_\nu\varepsilon
  \le
  e^{-\lambda_\nu t_*}\frac{r_\nu}{2}+\frac{r_\nu}{2}
  < r_\nu,
\]
because \(t_*>0\) implies \(e^{-\lambda_\nu t_*}<1\). This contradicts
\(\|y(t_*)\|=r_\nu\), and therefore \(t_*=T\). Hence
\eqref{eq:y_est_clean_adiabatic} holds on all of \([0,T]\), which is exactly
\eqref{eq:adiabatic_estimate}. The bound
\eqref{eq:adiabatic_neighbourhood} follows immediately from
\eqref{eq:initial_small_adiabatic}.

\end{proof}

\subsection{Proof of Lemma~\ref{lem:RLA_regular}}
\label{app:lem_RLA_regular}

\begin{proof}

By \eqref{eq:RLA},
\[
  \mathcal R_{\mathrm{LA}}(\theta)
  =
  \lambda_{\mathrm{PF}}\!\left(
    M(\theta)^{-1}
    \bigl[
      C^\dagger(\theta)K(\theta)+A^\dagger(\theta)D(\theta)
    \bigr]
  \right),
\]
where
\[
  A^\dagger(\theta)=\frac{\eta(\theta)}{\eta(\theta)+\rho(\theta)},
  \qquad
  C^\dagger(\theta)=\frac{\rho(\theta)}{\eta(\theta)+\rho(\theta)}.
\]
Since \(\eta(\theta)+\rho(\theta)\) is uniformly bounded away from zero on the
compact set \(\mathcal P\), the maps
\(\theta\mapsto A^\dagger(\theta)\) and \(\theta\mapsto C^\dagger(\theta)\)
are \(C^1\). Therefore the matrix
\[
  B^\dagger(\theta):=
  M(\theta)^{-1}
  \bigl[
    C^\dagger(\theta)K(\theta)+A^\dagger(\theta)D(\theta)
  \bigr]
\]
depends \(C^1\)-smoothly on \(\theta\). For each \(\theta\in\mathcal P\),
\(B^\dagger(\theta)\) is a positive \(2\times2\) matrix, so its
Perron--Frobenius eigenvalue is algebraically simple. Since
\(B^\dagger(\theta)\) depends \(C^1\)-smoothly on \(\theta\), standard
perturbation theory for simple eigenvalues implies that
\[
  \theta\mapsto \lambda_{\mathrm{PF}}(B^\dagger(\theta))
\]
is \(C^1\) on \(\mathcal P\); see, for example, Kato
\cite{Kato1995}. Since \(\mathcal P\) is compact, every \(C^1\)
map on \(\mathcal P\) is Lipschitz, which proves
\eqref{eq:R_lipschitz_delay}.

\end{proof}

\subsection{Proof of Proposition~\ref{prop:symm_benchmark}}
\label{app:prop_symm_benchmark}

\begin{proof}
Linearising \eqref{eq:model_symm} at \eqref{eq:E0_symm} gives the triangular
Jacobian
\[
  J\bigl(E_0^{\mathrm{sym}}\bigr)
  =
  \begin{pmatrix}
    (\beta-\mu)+(\delta-\beta)A^\dagger & 0\\[3pt]
    -2(\eta+\delta A^\dagger) & -(\eta+\rho)
  \end{pmatrix}.
\]
Hence the only nontrivial stability condition is
\[
  (\beta-\mu)+(\delta-\beta)A^\dagger<0.
\]
Substituting \(A^\dagger=\eta/(\eta+\rho)\) yields
\eqref{eq:RLA_symm}. Equivalently,
\[
  \mathcal R_{\mathrm{LA}}^{\mathrm{sym}}
  =
  \frac{C^\dagger\beta+A^\dagger\delta}{\mu},
\]
so the local growth criterion is a weighted blend of centrist recruitment and
mobilisation from alienation.

Now let \((P,A)\) be a positive equilibrium. The second equation of
\eqref{eq:model_symm} gives
\[
  \eta(1-2P-A)=(2\delta P+\rho)A,
\]
which is equivalent to \eqref{eq:AofP_symm}. Since \(P>0\), the first equation
requires
\[
  (\beta-\mu)-2\beta P+(\delta-\beta)A=0.
\]
Substituting \eqref{eq:AofP_symm} and clearing the positive denominator
\(\eta+\rho+2\delta P\) yields the quadratic equation
\eqref{eq:Q_symm}--\eqref{eq:Q_explicit_symm}. The endpoint identities in
\eqref{eq:Q_endpoints_symm} follow by direct substitution.

Finally, write
\[
  \dot P=P\,G(P,A),
  \qquad
  G(P,A):=(\beta-\mu)-2\beta P+(\delta-\beta)A.
\]
At a positive equilibrium one has \(P>0\) and \(G(P,A)=0\). Therefore
\[
  \frac{\partial \dot P}{\partial P}
  =
  G(P,A)+P\frac{\partial G}{\partial P}
  =
  -2\beta P,
  \qquad
  \frac{\partial \dot P}{\partial A}
  =
  P\frac{\partial G}{\partial A}
  =
  P(\delta-\beta).
\]
The derivatives of \(\dot A\) are
\[
  \frac{\partial \dot A}{\partial P}=-2(\eta+\delta A),
  \qquad
  \frac{\partial \dot A}{\partial A}=-(\eta+\rho+2\delta P),
\]
which proves \eqref{eq:J_positive_symm}.
\end{proof}

\subsection{Proof of Proposition~\ref{prop:all_positive_symm}}
\label{app:prop_all_positive_symm}

\begin{proof}
Under \eqref{eq:symm_assumptions}, the frozen equations for \(L\) and \(R\) are
\[
  \dot L=(\alpha C+\delta A-\mu)L+\gamma CR,
  \qquad
  \dot R=\gamma CL+(\alpha C+\delta A-\mu)R.
\]
At a positive equilibrium \(E^*\), adding the two equations gives
\[
  0=\dot L+\dot R
  =
  \bigl((\alpha+\gamma)C^*+\delta A^*-\mu\bigr)(L^*+R^*)
  =
  (\beta C^*+\delta A^*-\mu)(L^*+R^*).
\]
Since \(L^*+R^*>0\), it follows that
\begin{equation}\label{eq:sum_balance_symm}
  \beta C^*+\delta A^*-\mu=0.
\end{equation}
Subtracting the two equations yields
\[
  0=\dot L-\dot R
  =
  \bigl((\alpha-\gamma)C^*+\delta A^*-\mu\bigr)(L^*-R^*).
\]
Using \eqref{eq:sum_balance_symm} and \(\beta=\alpha+\gamma\), we obtain
\[
  (\alpha-\gamma)C^*+\delta A^*-\mu
  =
  (\beta-2\gamma)C^*+\delta A^*-\mu
  =
  -2\gamma C^*<0,
\]
because \(\gamma>0\) and \(C^*>0\). Therefore \(L^*-R^*=0\), so \(L^*=R^*\).
\end{proof}

\subsection{Proof of Proposition~\ref{prop:lift_symm_stability}}
\label{app:prop_lift_symm_stability}

\begin{proof}
For the antisymmetric variable \(H=L-R\), the full frozen symmetric system
satisfies
\[
  \dot H=\bigl((\alpha-\gamma)C+\delta A-\mu\bigr)H.
\]
At a positive symmetric equilibrium \(E^*=(P^*,P^*,A^*)\), the reduced
\(P\)-equation implies
\[
  (\beta-\mu)-2\beta P^*+(\delta-\beta)A^*=0.
\]
Since \(C^*=1-2P^*-A^*\), this is equivalent to
\[
  \beta C^*+\delta A^*-\mu=0.
\]
Using \(\beta=\alpha+\gamma\), we obtain
\[
  \lambda_H
  =
  (\alpha-\gamma)C^*+\delta A^*-\mu
  =
  (\beta-2\gamma)C^*+\delta A^*-\mu
  =
  -2\gamma C^*<0,
\]
because \(\gamma>0\) and \(C^*>0\) at every positive symmetric equilibrium.
Thus the antisymmetric mode is stable for any positive equilibrium of the full
frozen symmetric system. The remaining two eigenvalues coincide with those of
the planar Jacobian \(J(P^*,A^*)\), which proves the claim.
\end{proof}

\subsection{Proof of Theorem~\ref{thm:delay}}
\label{app:thm_delay}

\begin{proof}
Set
\[
  e(t):=\theta(t)-\theta_\star(t).
\]
Subtracting \(\dot\theta_\star(t)\) from \eqref{eq:theta_relax} yields
\[
  \dot e(t)=-\kappa_\theta e(t)-\dot\theta_\star(t)
  \qquad\text{for a.e. }t\in I.
\]
Variation of constants and \eqref{eq:target_speed_delay} give
\begin{equation}\label{eq:error_bound_app}
  \|e(t)\|
  \le
  e^{-\kappa_\theta(t-t_0)}\|e(t_0)\|
  +
  \frac{V_\star}{\kappa_\theta}\bigl(1-e^{-\kappa_\theta(t-t_0)}\bigr)
  \le
  E_\star
  \qquad\text{for all }t\in I.
\end{equation}
By Lemma~\ref{lem:RLA_regular},
\[
  |r_{\mathrm{eff}}(t)-r_\star(t)|\le L_{\mathcal R}E_\star,
\]
hence
\begin{equation}\label{eq:signed_gap_app}
  \sigma\bigl(r_{\mathrm{eff}}(t)-1\bigr)
  \ge
  \sigma\bigl(r_\star(t)-1\bigr)-L_{\mathcal R}E_\star
  \qquad\text{for all }t\in I.
\end{equation}
For \(t\in[t_0,t_\star)\), \eqref{eq:signed_crossing_delay} and
\eqref{eq:scalar_lag_delay} give
\(\sigma(r_{\mathrm{eff}}(t)-1)<0\), so
\begin{equation}\label{eq:no_early_app}
  t_{\mathrm{eff}}^\sigma\ge t_\star.
\end{equation}
Set \(\tau_\star:=L_{\mathcal R}E_\star/m_\star\). By
\eqref{eq:delay_smallness}, \(t_\star+\tau_\star\le t_\star+\Delta_\star\).
Applying \eqref{eq:transversal_delay} and \eqref{eq:signed_gap_app} at
\(t=t_\star+\tau_\star\),
\[
  \sigma\bigl(r_{\mathrm{eff}}(t_\star+\tau_\star)-1\bigr)
  \ge
  m_\star\tau_\star - L_{\mathcal R}E_\star = 0.
\]
Continuity of \(r_{\mathrm{eff}}\) then gives
\(t_{\mathrm{eff}}^\sigma\le t_\star+\tau_\star\).
Combining with \eqref{eq:no_early_app} yields \eqref{eq:delay_bound}.
\end{proof}


\end{document}